\tikzset{
  curarrow/.style={
  rounded corners=8pt,
  execute at begin to={every node/.style={fill=red}},
    to path={-- ([xshift=-50pt]\tikztostart.center)
    |- (#1) node[fill=white] {$\scriptstyle \delta$}
    -| ([xshift=50pt]\tikztotarget.center)
    -- (\tikztotarget)}
    }
}
\newtheorem{theorem}{Theorem}[section]
\newtheorem{corollary}[theorem]{Corollary}
\newtheorem{lemma}[theorem]{Lemma}
\theoremstyle{definition}
\newtheorem{example}[theorem]{Example}
\newtheorem{conj}[theorem]{Conjecture}
\newtheorem*{ack}{Acknowledgement}
\DeclareMathOperator{\Ima}{Im}
\title{Super edge-magic total strength of some unicyclic graphs}
\author{Nayana Shibu Deepthi}
\address{Department of Pure and Applied Mathematics, Graduate School of Information Science and Technology, Osaka University, Suita, Osaka 565-0871, Japan}
\email{nayanasd@ist.osaka-u.ac.jp}
\subjclass[2020]{05C78}
\keywords{Edge-magic total labeling - Unicyclic graph - Super edge-magic total labeling - Super edge-magic total strength.}
\begin{document}

\begin{abstract}
Let $G$ be a finite simple undirected $(p,q)$-graph, with vertex set $V(G)$ and edge set $E(G)$ such that $p=|V(G)|$ and $q=|E(G)|$. A super edge-magic total labeling $f$ of $G$ is a bijection $f\colon V(G)\cup E(G)\longrightarrow \{1,2,\dots , p+q\}$ such that for all edges $u v\in E(G)$, $f(u)+f(v)+f(u v)=c(f)$, where $c(f)$ is called a magic constant, and $f(V(G))=\{1,\dots , p\}$. The minimum of all $c(f)$, where the minimum is taken over all the super edge-magic total labelings $f$ of $G$, is defined to be the super edge-magic total strength of the graph $G$. In this article, we work on certain classes of unicyclic graphs and provide shreds of evidence to conjecture that the super edge-magic total strength of a certain family of unicyclic $(p,q)$-graphs is equal to $2q+\frac{n+3}{2}$.
\end{abstract}

\maketitle



\section{Introduction}\label{sec:2}

Throughout this article, we only consider finite simple undirected graphs. Let $G$ be a finite simple undirected graph with vertex set $V(G)$ and edge set $E(G)$. Let us assume that $|V(G)|=p$ and $|E(G)|=q$, then $G$ is called a {\em $(p,q)$-graph}.
For any undefined terms and notations in this article, we follow \cite{Bon}.

A {\em magic valuation} of a graph $G$ was first described by Kotzig and Rosa \cite{MagV} as a bijection $f\colon V(G)\cup E(G)\longrightarrow \{1,2,\dots , p+q\}$ such that for all edges $u v\in E(G)$, the sum $f(u)+f(v)+f(uv)$ is a constant, called as the {\em magic constant} of $f$. 
A graph is said to be {\em magic} if it has a magic valuation.
Ringel and Llado \cite{RinL} rediscovered this idea and gave it the name {\em edge-magic}. 
We shall use the phrase {\em edge-magic total}, as developed by Wallis \cite{Wa} to distinguish this usage from that of other forms of labelings that utilize the word magic. 

In \cite{MagV}, Kotzig and Rosa have demonstrated that the cycles $C_{n}$, $n\geq 3$ and the complete bipartite graphs $K_{m,n}$ with $m,n\geq 1$, are edge-magic total graphs. They also established that a complete graph $K_{n}$ is an edge-magic total graph if and only if $1\leq n\leq 6$, and further proved that the disjoint union of $n$ copies of $P_{2}$ has an edge-magic total labeling if and only if $n$ is odd. The bibliography section includes references to several pieces of pertinent literature, including \cite{BR, Gal, NgB, RinL, SB}.

Avadayappan, Vasuki, and Jeyanthi \cite{MagicS} introduced the concept of {\em edge-magic total strength} of a graph $G$ as the smallest magic constant over all edge-magic total labelings of $G$. Let us denote it by $em(G)$. Let $f$ be an edge-magic total labeling of $G$ with magic constant $c(f)$. By definition, 
$$em(G)=\min \big\{c(f)\colon f \textrm{ be an edge-magic total labeling of } G \big\}.$$

In this study, we explore an edge-magic total labeling $f$ of the $(p,q)$-graph $G$, such that $f(V(G))=\{1,2,\dots ,p\}$. Enomoto, Llado, Nakamigawa, and Ringel \cite{SEMg} has defined this type of labeling as a {\em super edge-magic total labeling}. If a graph $G$ contains a super edge-magic total labeling, then $G$ is called a {\em super edge-magic total}.

Enomoto, Llado, Nakamigawa, and Ringel \cite{SEMg} established that a complete graph $K_{n}$ is super edge-magic total if and only if $n = 1, 2,$ or $3$ and also proved that a complete bipartite graph $K_{m,n}$ is super edge-magic total if and only if $m = 1$ or $n = 1$. As demonstrated in \cite{SEMg}, cycles $C_{n}$ are super edge-magic total if and only if $n$ is odd. Some further results on the super edge-magic total graph can be found in \cite{Coro, FIM, Gal}.

A necessary and sufficient condition for a graph to be a super edge-magic total is stated in the following lemma. 

\begin{lemma}[{\cite[Lemma 1]{FIMun}}]\label{lem:SEM}
A $(p,q)$-graph $G$ is a super edge-magic total graph if and only if there exists a bijection $f\colon V(G) \longrightarrow \{1,2,\dots , p\}$, such that the set $\{f(u)+f(v)\colon u v\in E(G)\}$ is consecutive. And, $f$ extends to a super edge-magic total labeling with magic constant  $c(f)=p+q+\min\{f(u)+f(v)\colon u v\in E(G)\}$.
\end{lemma}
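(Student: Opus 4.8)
The plan is to deduce both implications from a single bookkeeping identity relating the label of an edge to the labels of its endpoints, namely $F(uv)=c-F(u)-F(v)$, and to read ``consecutive'' in the statement as ``a set of $q$ consecutive integers'' (the forward direction shows this is forced anyway).

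For the forward direction I would start from a super edge-magic total labeling $F\colon V(G)\cup E(G)\to\{1,\dots,p+q\}$ with magic constant $c$ and $F(V(G))=\{1,\dots,p\}$, so that $F(E(G))=\{p+1,\dots,p+q\}$. Setting $f:=F|_{V(G)}$, a bijection onto $\{1,\dots,p\}$, the magic condition gives $f(u)+f(v)=c-F(uv)$ for every edge $uv$. As $uv$ runs over $E(G)$ the value $F(uv)$ runs bijectively over $\{p+1,\dots,p+q\}$, hence $f(u)+f(v)$ runs bijectively over the block $\{c-(p+q),\dots,c-(p+1)\}$ of $q$ consecutive integers; in particular $\{f(u)+f(v)\colon uv\in E(G)\}$ is consecutive, its minimum is $c-(p+q)$, and so $c=p+q+\min\{f(u)+f(v)\colon uv\in E(G)\}$.

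For the converse I would take a bijection $f\colon V(G)\to\{1,\dots,p\}$ whose $q$ edge sums form the consecutive block $\{m,m+1,\dots,m+q-1\}$ with $m:=\min\{f(u)+f(v)\colon uv\in E(G)\}$, set $c:=p+q+m$, and extend $f$ to $V(G)\cup E(G)$ by declaring $F(uv):=c-f(u)-f(v)$ on each edge. The extension is then forced to work: since $f(u)+f(v)\in\{m,\dots,m+q-1\}$ we get $F(uv)=p+q+m-(f(u)+f(v))\in\{p+1,\dots,p+q\}$, and distinct edge sums yield distinct values $F(uv)$, so $F$ maps $E(G)$ bijectively onto $\{p+1,\dots,p+q\}$. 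Together with $F|_{V(G)}=f$ this makes $F$ a bijection $V(G)\cup E(G)\to\{1,\dots,p+q\}$ with $F(V(G))=\{1,\dots,p\}$ and $f(u)+f(v)+F(uv)=c$ on every edge, i.e., a super edge-magic total labeling with the asserted magic constant.

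No step is genuinely difficult; the only things to watch are the two ``endpoints'' of the bookkeeping. First, in the converse the consecutive set of edge sums must have exactly $q$ elements, or else the assignment $uv\mapsto c-f(u)-f(v)$ fails to be injective on $E(G)$; this is why ``consecutive'' has to be understood as ``$q$ consecutive integers''. Second, one must check that the shift by $p+q$ carries the vertex-sum block $\{m,\dots,m+q-1\}$ exactly onto the admissible edge-label block $\{p+1,\dots,p+q\}$, which is precisely the computation $c=p+q+m$. With these in place the identity $F(uv)=c-f(u)-f(v)$ does essentially all the work in both directions.
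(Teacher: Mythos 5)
Your proof is correct, and it is essentially the standard argument: the paper itself does not prove this lemma (it is quoted from Figueroa-Centeno, Ichishima and Muntaner-Batle), but your converse construction $F(uv)=p+q+\min\{f(u)+f(v)\colon uv\in E(G)\}-f(u)-f(v)$ is exactly the extension formula the paper records immediately after the statement. Your one substantive observation is also the right one: as written, ``consecutive'' must be read as ``$q$ consecutive integers'' (equivalently, distinct edges have distinct vertex-label sums), since otherwise the edge-label assignment is not injective; this is precisely how the cited Lemma 1 of \cite{FIMun} is phrased, and your forward direction correctly shows the condition is forced in any case.
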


In this case, the vertex labeling $f$ can be extended to a super edge-magic total labeling of $G$ by defining $f(uv) = p + q + \min\big\{f(u) + f(v) \colon uv \in E(G)\big\} - f(u) - f(v)$, for every edge $uv \in E(G)$.

For any regular super edge-magic total graph, we have the following result.

\begin{lemma}[{\cite[Lemma 4]{FIMun}}]\label{lem:cyc}
Let $G$ be an $r$-regular $(p, q)$-graph, where $r > 0$. Let $f$ be any super edge-magic total labeling of $G$. Then $q$ is odd and $c(f)=\frac{4p + q + 3}{2}$, for all super edge-magic total labeling $f$.
\end{lemma}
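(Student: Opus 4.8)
The plan is to extract the combinatorial content of a super edge-magic total labeling via Lemma~\ref{lem:SEM} and then exploit $r$-regularity through a double-counting argument on the edge sums. Let $f$ be a super edge-magic total labeling of $G$. By Lemma~\ref{lem:SEM}, $f$ restricts to a bijection $V(G)\to\{1,\dots,p\}$ for which the multiset (in fact set) $S=\{f(u)+f(v)\colon uv\in E(G)\}$ consists of $q$ consecutive integers; write $m=\min S$, so that $S=\{m,m+1,\dots,m+q-1\}$ and, by the same lemma, $c(f)=p+q+m$. The whole proof then reduces to pinning down $m$.

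First I would compute $\sum_{uv\in E(G)}\big(f(u)+f(v)\big)$ in two ways. On one hand, summing the consecutive values gives $\sum_{i=0}^{q-1}(m+i)=qm+\tfrac{q(q-1)}{2}$. On the other hand, since $G$ is $r$-regular every vertex label $f(v)$ is counted exactly $r$ times, so the sum equals $r\sum_{v\in V(G)}f(v)=r\cdot\tfrac{p(p+1)}{2}$. Using the handshake identity $rp=2q$ (valid since $r>0$, which also guarantees $q>0$), the right-hand side simplifies to $q(p+1)$.

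Equating the two expressions yields $qm+\tfrac{q(q-1)}{2}=q(p+1)$, and dividing by $q>0$ gives $m=p+1-\tfrac{q-1}{2}=\tfrac{2p-q+3}{2}$. Since $m$ is an integer and $2p+3$ is odd, the numerator $2p-q+3$ is even precisely when $q$ is odd; hence $q$ must be odd. Substituting back, $c(f)=p+q+m=p+q+\tfrac{2p-q+3}{2}=\tfrac{4p+q+3}{2}$, and since this value does not depend on the choice of $f$, the claimed formula holds for every super edge-magic total labeling.

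There is no serious obstacle here; the argument is a short double count. The only points requiring care are invoking Lemma~\ref{lem:SEM} correctly so that the edge sums are genuinely $q$ consecutive integers, noting that $r>0$ forces $q>0$ so that the division by $q$ is legitimate, and observing that the integrality of $m$ is exactly what forces the parity condition on $q$.
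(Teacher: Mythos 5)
Your argument is correct, and since the paper only quotes Lemma~\ref{lem:cyc} from \cite{FIMun} without reproducing a proof, there is nothing to diverge from: your double count of $\sum_{uv\in E(G)}(f(u)+f(v))$, combined with the handshake identity $rp=2q$ and the integrality of the minimum edge sum, is exactly the standard derivation of $q$ odd and $c(f)=\frac{4p+q+3}{2}$. One small caveat: what you actually need from Lemma~\ref{lem:SEM} is its forward direction for a \emph{given} labeling $f$, which is immediate anyway because the edge labels are $\{p+1,\dots,p+q\}$ and $f(u)+f(v)=c(f)-f(uv)$, so the edge sums form $q$ consecutive integers and $c(f)=p+q+\min\{f(u)+f(v)\colon uv\in E(G)\}$; the lemma as stated in the paper is phrased as an existence criterion rather than this statement.
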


By using Lemma \ref{lem:cyc}, we can derive that for an odd cycle $C_{n}$ and with any super edge-magic total labeling  $f$  of $C_{n}$, 
\begin{equation*}
\begin{split}
c(f) & =  2n+ \min\{f(u)+f(v)\colon u v\in E(C_{n})\} = 2n+ \frac{n+3}{2}\\
& \implies \min\{f(u)+f(v)\colon u v\in E(C_{n})\} =  \frac{n+3}{2}.
\end{split}
\end{equation*}

The article \cite{newSEM} demonstrates that by considering a super edge-magic total labeling of a super edge-magic total graph, we can add vertices and edges to the given graph such that the new graph constructed is also super edge-magic total. 

\begin{theorem}[{\cite[Theorem 2.4]{newSEM}}]\label{thm:new}
Let $G_{p}$ be a connected super edge-magic total $(p,q)$-graph with $p\geq 3$. Let $f$ be a super edge-magic total labeling  of $G_{p}$ and let us consider $F_{f}(G_{p})=\{f(u)+f(v)\colon u v\in E(G_{p})\}$. Let $\max (F_{f}(G_{p}))= p+t$, and for some $a\in V(G_{p})$, $f(a)=t$. We construct a graph $\widetilde{G_{p}}$, by taking each copy of $G_{p}$ and $m K_{1},\ m\geq 1$ and connecting all the vertices of $mK_{1}$ to the vertex $a\in V(G_{p})$. Then, $\widetilde{G_{p}}$ is also a super edge-magic total graph.
\end{theorem}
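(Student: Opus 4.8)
The plan is to invoke Lemma~\ref{lem:SEM}: it suffices to exhibit a bijection $g\colon V(\widetilde{G_p})\longrightarrow\{1,\dots,p+m\}$ for which the edge-sum set $\{g(u)+g(v)\colon uv\in E(\widetilde{G_p})\}$ is an interval of consecutive integers. Write the new vertices as $w_1,\dots,w_m$, so that $V(\widetilde{G_p})=V(G_p)\cup\{w_1,\dots,w_m\}$ and $E(\widetilde{G_p})=E(G_p)\cup\{aw_i\colon 1\le i\le m\}$; thus $\widetilde{G_p}$ is a connected simple $(p+m,\,q+m)$-graph (it is simple since the $w_i$ are new and distinct from $a$, and connected since $G_p$ is connected and each $w_i$ is joined to $a$). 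I would then define $g$ by keeping the old labels, $g(x)=f(x)$ for $x\in V(G_p)$, and setting $g(w_i)=p+i$ for $1\le i\le m$. Since $f$ is super edge-magic total, Lemma~\ref{lem:SEM} gives $f(V(G_p))=\{1,\dots,p\}$, so $g$ is indeed a bijection onto $\{1,\dots,p+m\}$.

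Next I would compute the two families of edge sums. For the edges of $G_p$, Lemma~\ref{lem:SEM} tells us that $F_f(G_p)$ is a set of $q$ consecutive integers; combined with the hypothesis $\max F_f(G_p)=p+t$, this forces
\[
\{g(u)+g(v)\colon uv\in E(G_p)\}=F_f(G_p)=\{\,p+t-q+1,\ p+t-q+2,\ \dots,\ p+t\,\}.
\]
For the newly added edges, $g(a)+g(w_i)=t+(p+i)=p+t+i$ for $i=1,\dots,m$, which runs over $\{p+t+1,\dots,p+t+m\}$. Hence the full edge-sum set of $\widetilde{G_p}$ is $\{p+t-q+1,\dots,p+t+m\}$, a block of $q+m$ consecutive integers.

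By Lemma~\ref{lem:SEM}, $g$ therefore extends to a super edge-magic total labeling of $\widetilde{G_p}$ (with magic constant $c(g)=(p+m)+(q+m)+(p+t-q+1)=2p+2m+t+1$), which proves the theorem. The only real content is the verification that the new sums $p+t+1,\dots,p+t+m$ continue the old block $\{p+t-q+1,\dots,p+t\}$ with neither a gap nor an overlap, and this is exactly what the choice of a vertex $a$ with $f(a)=t$ and $p+t=\max F_f(G_p)$ guarantees; so there is no genuine obstacle. The only minor points to check are that such an $a$ exists (it does: $1\le t\le p-1$, since the maximum edge sum of $G_p$ lies between $p+1$ and $2p-1$, so the label $t$ is attained) and the simplicity/connectedness remarks above.
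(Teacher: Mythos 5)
Your proof is correct: keeping the labels of $f$ on $V(G_p)$, assigning $p+1,\dots,p+m$ to the new pendant vertices attached at the vertex $a$ with $f(a)=t$, and observing that the new sums $p+t+1,\dots,p+t+m$ extend the consecutive block $\{p+t-q+1,\dots,p+t\}$ is exactly the intended argument, and Lemma~\ref{lem:SEM} then finishes it. Note that the paper itself does not prove this statement but quotes it from the cited reference, where the construction is established by essentially the same labeling you give; so your proposal matches the standard approach and has no gaps.
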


In \cite{SEMS}, the concept of {\em super edge-magic total strength} of a graph $G$ was introduced, where it is defined as the minimum magic constant, the minimum taken over all the super edge-magic total labelings of $G$. We denote it as $sm(G)$. So, we have $$sm(G)=\min \big\{c(f)\colon f \textrm{ is a super edge-magic total labeling of } G\big\}.$$

The super edge-magic total strength of an odd cycle $C_{n}$ was obtained as $\frac{5n+3}{2}$, in \cite{SEMS}. More results regarding the super edge-magic total strength of certain families of graphs are demonstrated in \cite{SEMS, Uni}.

Let $G$ be a super edge-magic total $(p,q)$-graph. For any $v\in V(G)$, the number of edges adjacent to vertex $v$ is called the {\em degree} of $v$, denoted by $\mathrm{deg}(v)$. Let  $f$ be a super edge-magic total labeling of $G$, with the magic constant $c(f)$.  As noted in \cite{MagicS}, each edge's magic constants are added together to produce the following result:

\begin{equation}\label{eq:1}
q c(f)=\sum_{v\in V(G)}\mathrm{deg}(v)f(v)+\sum_{e\in E(G)}f(e).
\end{equation}

Also, since $\Ima(f)= \{1,2,\dots , p+q\}$ and $f(V(G))= \{1,2, \dots ,p\}$, we can derive that $p+q+3\leq sm(G)\leq 3p$. 

In this paper, we are interested in the family of unicyclic graphs which consists of an odd cycle $C_{n}$ and $k_i$ number of pendant vertices adjacent to each ${i}\in V(C_{n})$. 

Let us consider $G(n;k_{1},\dots ,k_{n})$ to be the unicyclic $(p,q)$-graph consisting of an odd cycle $C_n=\{a_1,a_2,\dots ,a_n\}$ and $k_i$ number of pendant vertices adjacent to the vertex $a_i,\ 1\leq i\leq n$.  
Swaminathan and Jeyanthi \cite{Uni} established a range for the super edge-magic total strength of this family of unicyclic graphs. 

\begin{theorem}[{\cite[Theorem 4]{Uni}}]\label{thm:uni}
 The unicyclic $(p,q)$-graph $G(n;k_{1},\dots ,k_{n})$, where $n=2s+1$,  is a super edge-magic total graph and
\begin{equation*}
\begin{split}
& 2q+2+\frac{1}{q}\bigg(m_2+2m_3+\dots + (n-1)m_n+\frac{n(n-1)}{2}\bigg) \leq s m\big(G(n;k_{1},\dots ,k_{n})\big)\\
& \leq 2(k_1+k_3+\dots +k_{2s+1})+3(k_2+k_4+\dots +k_{2s})+2n+s+2,
\end{split}
\end{equation*}
where $m_1\geq m_2\geq\dots\geq m_n$ are integers such that 
$$\{m_1,m_2,\dots ,m_n\}=\{k_1,k_2,\dots , k_n\}.$$
\end{theorem}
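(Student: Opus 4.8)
The plan is to prove the two displayed inequalities separately. Since a super edge‑magic total labeling of $G=G(n;k_1,\dots,k_n)$ is needed even for $sm(G)$ to be defined, I would first produce the explicit labeling that yields the upper bound (this simultaneously shows $G$ is super edge‑magic total), and then deduce the lower bound from the counting identity \eqref{eq:1}. A fact used throughout is that $G$ has $p=|V(G)|=n+\sum_{i=1}^n k_i$ vertices and $q=|E(G)|=n+\sum_{i=1}^n k_i$ edges, so $p=q$; it is this identity that makes the final formulas collapse cleanly.

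For the upper bound, I would exhibit a bijection $f\colon V(G)\to\{1,\dots,p\}$ whose edge‑sum set $\{f(u)+f(v):uv\in E(G)\}$ is the block of $q$ consecutive integers with minimum $\tfrac{n+3}{2}+(k_2+k_4+\dots+k_{2s})$. By Lemma \ref{lem:SEM} this extends to a super edge‑magic total labeling with $c(f)=p+q+\tfrac{n+3}{2}+(k_2+\dots+k_{2s})$, and since $p=q=n+\sum k_i$ one checks that this equals $2(k_1+k_3+\dots+k_{2s+1})+3(k_2+k_4+\dots+k_{2s})+2n+s+2$, the asserted bound; in particular $G$ is super edge‑magic total. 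The labeling I have in mind generalizes the standard super edge‑magic labeling of $C_{2s+1}$, the one sending $a_{2j-1}\mapsto j$ for $1\le j\le s+1$ and $a_{2j}\mapsto s+1+j$ for $1\le j\le s$, whose edge sums run consecutively from $\tfrac{n+3}{2}$ to $\tfrac{3n+1}{2}$. The idea is to insert the pendant vertices into the label range so that the $k_i$ pendant edges at each $a_i$ contribute a block of $k_i$ consecutive sums (achieved by giving those pendants consecutive labels), and then to place these $n+1$ many blocks together with the $n$ cycle‑edge sums so that they tile the target interval; the pendant vertices at the even‑indexed cycle vertices are the ones pushed toward the top of the interval, which is exactly why each of them contributes a $3$ rather than a $2$ to the magic constant.

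For the lower bound, fix any super edge‑magic total labeling $f$ with magic constant $c(f)$ and apply \eqref{eq:1}, $q\,c(f)=\sum_{v}\mathrm{deg}(v)f(v)+\sum_{e}f(e)$. Because $f(V(G))=\{1,\dots,p\}$ forces $f(E(G))=\{p+1,\dots,p+q\}$, the edge term $\sum_e f(e)=\sum_{j=p+1}^{p+q}j$ is the same for every labeling, so minimizing $c(f)$ reduces to minimizing $\sum_v\mathrm{deg}(v)f(v)$, which is at least its minimum over all bijections $V(G)\to\{1,\dots,p\}$. The degree multiset of $G$ consists of $k_1+2,\dots,k_n+2$ together with the value $1$ with multiplicity $\sum k_i$, so by the rearrangement inequality this latter minimum is attained by giving the cycle vertex of degree $m_i+2$ the label $i$ for $1\le i\le n$ and the pendant vertices the labels $n+1,\dots,p$, where $m_1\ge\dots\ge m_n$ is the decreasing rearrangement of the $k_i$. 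Substituting $\sum_{i=1}^n(m_i+2)i+\sum_{j=n+1}^p j+\sum_{j=p+1}^{p+q}j$ for the right‑hand side of \eqref{eq:1}, dividing by $q$, and simplifying with $q=p=n+\sum m_i$ and $\sum_{i=1}^n(i-1)m_i=m_2+2m_3+\dots+(n-1)m_n$ yields exactly $2q+2+\tfrac{1}{q}\bigl(m_2+2m_3+\dots+(n-1)m_n+\tfrac{n(n-1)}{2}\bigr)$; this is a routine if slightly long algebraic manipulation.

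The main obstacle is the upper‑bound construction. Because $G$ contains the cycle $C_{2s+1}$, the labeling must be arranged so that the pendant‑edge blocks at the various vertices interlock perfectly with all $n$ cycle‑edge sums — including the closing edge $a_{2s+1}a_1$ — leaving the target interval covered with no gaps or repetitions, and getting the indices right forces a careful treatment of the odd‑ versus even‑indexed cycle vertices (and of $a_1$ in particular). Once the labeling is written down, checking consecutiveness is a direct case check over the cycle edges and the pendant edges at each $a_i$, and the lower bound is then essentially dictated by \eqref{eq:1} and the rearrangement inequality. Finally, I would note that when $k_2=k_4=\dots=k_{2s}=0$ the upper bound becomes $2q+\tfrac{n+3}{2}$, which is the source of the conjecture stated in the abstract.
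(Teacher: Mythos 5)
Your lower-bound half is sound and complete in outline: fixing a super edge-magic total labeling, using \eqref{eq:1}, noting that $\sum_{e}f(e)$ is forced to be $\sum_{j=p+1}^{p+q}j$, and minimizing $\sum_{v}\deg(v)f(v)$ over all bijections by the rearrangement inequality (largest degrees $m_i+2$ get labels $1,\dots,n$, pendants get $n+1,\dots,p$) does reproduce, after the algebra with $p=q=n+\sum_i m_i$, exactly $2q+2+\frac{1}{q}\bigl(m_2+2m_3+\dots+(n-1)m_n+\frac{n(n-1)}{2}\bigr)$; I checked the simplification and it is correct. This is the same counting technique the present paper uses for the lower bounds in Theorems \ref{thm:k+c}, \ref{thm:k-c} and \ref{thm:3}. (Note, for calibration, that the paper does not prove Theorem \ref{thm:uni} at all: it is quoted from \cite{Uni}, so there is no internal proof to compare against.)

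The genuine gap is in the upper-bound half, which is also where the claim that $G(n;k_1,\dots,k_n)$ \emph{is} super edge-magic total lives in your plan. You correctly reduce the bound to exhibiting a bijection $f\colon V(G)\to\{1,\dots,p\}$ whose edge sums are $q$ consecutive integers with minimum $\frac{n+3}{2}+(k_2+k_4+\dots+k_{2s})$, and the arithmetic showing this yields the stated bound via Lemma \ref{lem:SEM} is right. But the bijection itself is never defined: ``insert the pendant vertices so that each $a_i$ contributes a block of $k_i$ consecutive sums and the blocks tile the target interval'' is a description of the desired outcome, not a construction, and you yourself flag it as the main obstacle. The difficulty is real --- with the cycle labels fixed as in the standard $C_{2s+1}$ labeling, pendant labels $>n$ produce sums that collide with the cycle-edge sums unless the cycle labels and pendant labels are shifted against each other in a coordinated way, and it is precisely this shifting that produces the extra $+\sum_{\mathrm{even}}k_i$ in the minimum sum (your heuristic that the even-indexed pendants are ``pushed to the top'' is not obviously what a working labeling does). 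Until an explicit labeling is written down, with a verification that the cycle sums and the $n$ pendant blocks are pairwise disjoint and jointly consecutive, neither the upper bound nor the super edge-magic totality is established; everything else in your argument is in order.
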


\begin{corollary}[{\cite[Corollary 4.1]{Uni}}]\label{lem:uni_coro}
For any unicyclic graph $G(n;k_{1},\dots ,k_{n})$ such that $k_{i}=k,$ for any $1\leq i\leq n$,
$$sm(G(n;k,\dots ,k))=2n(k+1)+\frac{n+3}{2}.$$ 
\end{corollary}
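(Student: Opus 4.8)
The plan is to prove the two matching inequalities separately, after recording that $G(n;k,\dots,k)$ has $p=|V(G)|=n+nk=n(k+1)$ vertices and $q=|E(G)|=n+nk=n(k+1)$ edges, so $p=q=n(k+1)$; by Lemma~\ref{lem:SEM} every super edge-magic total labeling $f$ satisfies $c(f)=2n(k+1)+\min\{f(u)+f(v):uv\in E(G)\}$, so the whole question reduces to controlling this minimum edge-sum.

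For the lower bound I would simply specialize Theorem~\ref{thm:uni}: with $k_1=\dots=k_n=k$ we get $m_1=\dots=m_n=k$, hence $m_2+2m_3+\dots+(n-1)m_n=k\cdot\frac{n(n-1)}{2}$, and the left-hand bound becomes
\[
2q+2+\frac1q\Bigl((k+1)\tfrac{n(n-1)}{2}\Bigr)=2n(k+1)+2+\tfrac{n-1}{2}=2n(k+1)+\tfrac{n+3}{2},
\]
using $q=n(k+1)$. (The same bound also follows directly from \eqref{eq:1}: since $f(V(G))=\{1,\dots,p\}$, the quantity $\sum_{e\in E(G)}f(e)=\sum_{i=p+1}^{p+q}i$ is a constant, so minimizing $c(f)$ is the same as minimizing $\sum_{v}\deg(v)f(v)$, and by the rearrangement inequality this is minimized by giving the smallest labels $\{1,\dots,n\}$ to the only vertices of degree exceeding $1$, namely the $n$ cycle vertices; a short computation then gives $c(f)\ge p+q+\frac{n+3}{2}$.)

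For the upper bound I would construct, via Lemma~\ref{lem:SEM}, a vertex labeling realizing minimum edge-sum $\frac{n+3}{2}$. Write $n=2s+1$ and label the cycle $C_n=\{a_1,\dots,a_n\}$ by the standard odd-cycle labeling, placing around the cycle the values $1,s+2,2,s+3,3,\dots,s+1$ (value $m$ in position $2m-1$, value $s+1+m$ in position $2m$); then the $n$ cycle-edge sums are exactly $s+2,s+3,\dots,3s+2$, with minimum $\frac{n+3}{2}$. Let $\pi$ be the cyclic-successor permutation of $\{1,\dots,n\}$ induced by this arrangement, so that, automatically, $\{\,j+\pi(j):1\le j\le n\,\}=\{s+2,\dots,3s+2\}$. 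Now label the pendants in $k$ rounds: the round-$t$ pendant ($1\le t\le k$) attached to the cycle vertex carrying label $j$ receives label $tn+\pi(j)$. Since $\pi$ is a bijection, these labels run over $\{n+1,\dots,n(k+1)\}$ without repetition and exhaust the pendant vertices, while the round-$t$ pendant edge-sums form $\{\,j+tn+\pi(j):1\le j\le n\,\}=\{\,(s+2)+tn,\dots,(3s+2)+tn\,\}$, so successive rounds meet end-to-end and the $nk$ pendant edge-sums fill $\{3s+3,\dots,(3s+2)+nk\}$, sitting immediately above the cycle-edge block. Hence the full edge-sum set is the run $\{s+2,s+3,\dots,(3s+2)+nk\}$ of $q=n(k+1)$ consecutive integers with minimum $s+2=\frac{n+3}{2}$, and Lemma~\ref{lem:SEM} extends $f$ to a super edge-magic total labeling with $c(f)=p+q+\frac{n+3}{2}=2n(k+1)+\frac{n+3}{2}$. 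Together with the lower bound this gives the claimed equality (and for $k=0$ it recovers $sm(C_n)=\frac{5n+3}{2}$).

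The main obstacle is verifying that this round-by-round scheme genuinely yields one unbroken run of edge-sums with no collisions. Everything rests on two facts: that $\{\,j+\pi(j):j\,\}$ is itself a block of $n$ consecutive integers (inherited from the cycle labeling), and that this block does not depend on the round index $t$, so the $k$ translated blocks for rounds $1,\dots,k$ tile an interval of length $nk$ with no gaps or overlaps. Once the cyclic-successor permutation is written out explicitly — $\pi(j)=j+s+1$ for $1\le j\le s$, $\pi(s+1)=1$, and $\pi(j)=j-s$ for $s+2\le j\le n$ — all of this is routine; the only care needed is the index bookkeeping modulo $n$ and the parity split coming from $n=2s+1$.
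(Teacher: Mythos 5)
Your proposal is correct, and it is worth noting that the paper itself offers no proof of this statement: it is imported verbatim as \cite[Corollary 4.1]{Uni}, so the only in-paper comparison is with the technique the author uses for the analogous results in Sections \ref{sec:3}--\ref{sec:5}. Your argument follows exactly that template — lower bound from the identity \eqref{eq:1} (equivalently, the specialization $m_1=\dots=m_n=k$, $q=n(k+1)$ of the left inequality in Theorem \ref{thm:uni}, which indeed evaluates to $2n(k+1)+\tfrac{n+3}{2}$), and upper bound from an explicit vertex labeling with consecutive edge sums fed into Lemma \ref{lem:SEM}. One point in your favor that deserves emphasis: the corollary does \emph{not} follow from Theorem \ref{thm:uni} alone, since its right-hand bound specializes to $5ks+2k+5s+4$, which exceeds the claimed value $4ks+2k+5s+4$ by $ks$; so an explicit construction is genuinely required, and you supply one. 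Your round-by-round scheme (round-$t$ pendant at the cycle vertex labeled $j$ gets $tn+\pi(j)$, with $\pi$ the cyclic-successor permutation) checks out: the cycle edge sums form the block $\{s+2,\dots,3s+2\}$, each translated pendant block $\{tn+s+2,\dots,tn+3s+2\}$ abuts the previous one because $(t+1)n+s+2=tn+3s+3$, the labels $tn+\pi(j)$ exhaust $\{n+1,\dots,n(k+1)\}$ bijectively, and each cycle vertex receives exactly one pendant label per round, so Lemma \ref{lem:SEM} gives $c(f)=2n(k+1)+\tfrac{n+3}{2}$, matching the lower bound. Your formulation via the permutation $\pi$ is a slightly cleaner repackaging of the same kind of explicit formulas the paper writes down (compare the labeling $f'$ in Section \ref{sec:3} and \eqref{eq:semG} with $c=0$); it buys a more transparent reason why the pendant edge-sum blocks tile an interval, at the cost of having to introduce and unwind $\pi$ explicitly, which you do correctly.
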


In this article, we will be investigating the super edge-magic total strength of the family of unicyclic graphs having an odd cycle $C_{n}$ and $k_i$ number of pendant vertices adjacent to each ${i}\in V(C_{n})$. We will compute the super edge-magic total strength of certain graphs and provide supporting evidence for the following conjecture.

\begin{conj}\label{cnj:main}
 Let $G(n;k_{1},\dots ,k_{n})$ be the super edge-magic total unicyclic $(p,q)$-graph consisting of an odd cycle $C_n=\{a_1,a_2,\dots ,a_n\}$ and $k_i$ number of pendant vertices adjacent to each $a_i,\ 1\leq i\leq n$. Then,
 $$s m(G(n;k_{1},\dots ,k_{n})) = 2q + \frac{n+3}{2}. $$
\end{conj}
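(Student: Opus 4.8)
The plan is to exploit the special feature of this family that $p=|V(G)|=n+\sum_i k_i=|E(G)|=q$. By Lemma~\ref{lem:SEM} every super edge-magic total labeling $f$ then has $c(f)=2q+s(f)$, where $s(f):=\min\{f(u)+f(v):uv\in E(G)\}$, so Conjecture~\ref{cnj:main} is equivalent to the assertion that the minimum of $s(f)$, over all vertex labelings $f\colon V(G)\to\{1,\dots,p\}$ whose induced edge sums are consecutive, equals $\tfrac{n+3}{2}$. First I would extract from \eqref{eq:1} an exact formula for $s(f)$: writing $\ell_i=f(a_i)$ and using $\mathrm{deg}(a_i)=k_i+2$, $\sum_{v}f(v)=\binom{p+1}{2}$ and $\sum_{e}f(e)=\sum_{j=p+1}^{2p}j$, one gets $p\,c(f)=\binom{2p+1}{2}+\sum_{i=1}^n(1+k_i)\ell_i$, and after simplification
\[
  s(f)=1+\frac{1}{p}\sum_{i=1}^{n}(1+k_i)\,\ell_i .
\]
Hence the conjecture reduces to two statements: $\sum_i (1+k_i)\ell_i\ge \tfrac{p(n+1)}{2}$ for \emph{every} admissible labeling (lower bound), and $\sum_i(1+k_i)\ell_i=\tfrac{p(n+1)}{2}$ for \emph{some} admissible labeling (upper bound); equivalently, the $(\mathrm{deg}(a_i)-1)$-weighted average of the cycle labels is always at least $\tfrac{n+1}{2}$, and this value is attained.

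For the upper bound I would give an explicit construction. The natural candidate places the labels $1,\dots,n$ on the cycle vertices in the standard zig-zag order that makes $C_n$ super edge-magic total, so that the $n$ cycle edge sums form an arithmetic-progression-like set inside an interval of length $q$ leaving exactly $\sum_i k_i$ gaps, and then assigns the labels $n+1,\dots,p$ to the pendant vertices so that each pendant edge sum lands in one gap. For the magic constant to equal $2q+\tfrac{n+3}{2}$ one additionally needs the balance condition $\sum_i k_i\ell_i=\tfrac{n+1}{2}\sum_i k_i$, i.e.\ the $k_i$-weighted centroid of $\{1,\dots,n\}$ must coincide with its ordinary centroid $\tfrac{n+1}{2}$. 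I would try to force this by a swapping/averaging argument: the rotations, the reflection, and the legal relabelings of the zig-zag labeling of $C_n$ generate a family of admissible configurations whose average value of $\sum_i k_i\ell_i$ is exactly $\tfrac{n+1}{2}\sum_i k_i$, and, since $s(f)$ is forced to be an integer, a member of the family hitting this value should exist. Checking that the pendant-gap assignment can always be completed simultaneously with this balance requirement is the delicate point; it is presumably why the paper first settles the symmetric cases (all $k_i$ equal, as in Corollary~\ref{lem:uni_coro}, and the other special families), where the balance holds automatically.

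For the lower bound one must prove $s(f)\ge\tfrac{n+3}{2}$ for every admissible $f$. Summing the magic constants as in \eqref{eq:1} and applying the rearrangement inequality only reproves the lower bound of Theorem~\ref{thm:uni}, which reaches $\tfrac{n+3}{2}$ exactly when the $k_i$ are all equal; so one genuinely has to use the consecutiveness of the edge sums, not merely their total. The mechanism I would aim to formalise is a \emph{large-label collision}: if $s(f)$ were too small, the labels $1,2,\dots$ would occupy an independent set clustered around the vertex of maximum degree (the cycle vertex with largest $k_i$), which in turn squeezes the largest labels $p,p-1,\dots$ into positions where two of them are adjacent, producing an edge sum exceeding $s(f)+q-1$ and contradicting consecutiveness. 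Quantifying this — bounding how many of the top labels are compelled onto mutually adjacent vertices, given that all pendants hang from only $n$ cycle vertices, and then propagating the constraint back to a lower bound on $\sum_i(1+k_i)\ell_i$ — is, I expect, the main obstacle, and it is precisely the gap between $\tfrac{n+3}{2}$ and the previously known range that this paper is designed to close in special cases.
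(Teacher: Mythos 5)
The statement you are proving is, in the paper, only a conjecture: the paper itself proves no general result, but supports it by computing $sm$ for three nearly-balanced families (Sections \ref{sec:3}--\ref{sec:5}), in each case matching a counting lower bound derived from \eqref{eq:1} with an explicit labeling. Your algebraic reduction is correct and is a clean way to frame the problem: since $p=q$ for unicyclic graphs, every super edge-magic total labeling has $c(f)=2q+s(f)$, and \eqref{eq:1} gives exactly $s(f)=1+\frac{1}{p}\sum_i(1+k_i)\ell_i$, so the conjecture is equivalent to the weighted-sum statement you derive (and your balance condition $\sum_i k_i\ell_i=\tfrac{n+1}{2}\sum_i k_i$ is the right specialization when the cycle carries the labels $1,\dots,n$). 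But beyond this reformulation the proposal does not close either bound, and you acknowledge as much.

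Concretely: for the upper bound, the averaging/swapping argument is a non sequitur --- the fact that a family of admissible configurations has average weighted sum $\tfrac{n+1}{2}\sum_i k_i$ does not produce a member attaining that value (integrality of $s(f)$ only forces $\sum_i(1+k_i)\ell_i\equiv 0 \pmod p$, not equality with the mean), and the genuinely hard part is exactly the one you defer, namely that the balance condition can be met \emph{simultaneously} with the consecutiveness of all $p$ edge sums; the paper only achieves this by bespoke constructions such as \eqref{eq:semG} and \eqref{eq:semlabel2} in its three special families. For the lower bound, the ``large-label collision'' mechanism is stated only as a heuristic with no quantitative lemma, and it must overcome a real obstruction: when the $k_i$ are very unbalanced (e.g.\ almost all pendants at one cycle vertex), the rearrangement bound from \eqref{eq:1} gives $s(f)$ barely above $2$, far below $\tfrac{n+3}{2}$, which is precisely why the paper imposes restrictions like $c<\tfrac{2n(k+1)}{n-3}$ in Theorem \ref{thm:k+c} and leaves the general case open. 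As it stands, your text is a reasonable research plan, not a proof, and it does not even recover the special cases the paper actually establishes, since those require the explicit labelings rather than an averaging existence argument.
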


In this article, we particularly examine three specific graphs belonging to the family of unicyclic graphs $G(n;k_{1},\dots ,k_{n})$ and provide substantial evidence in favor of our Conjecture \ref{cnj:main}.

A brief structure of this paper is as follows.  
Section \ref{sec:3} deals with the unicyclic graph $G(n;k_{1},\dots ,k_{n})$ with $k_{i}=k$, for any $1\leq i\leq n-1$ and $k_{n}=k+c$, where $1\leq c < \frac{2n(k+1)}{n-3}$. We further prove that the super edge-magic total strength of this graph satisfies our main conjecture.
In Section \ref{sec:4}, we study $G(n;k,\dots ,k,k-c)$, where $1\leq c\leq k$ and prove that the super edge-magic total strength of this family of graphs satisfies the conjecture. 
Section \ref{sec:5} is about the unicyclic graph $G(n,k_{1},\dots ,k_{n})$ with $k_{i}=k,$ if $i\neq r,n-r$ and $k_{r}=k_{n-r}=k+1$ for any odd number $r$, $1\leq r< n$. Further, we prove that this family of graphs also satisfies our main conjecture.
All of our conclusions from this study are included in Section \ref{sec:6}.

\begin{ack}
I would like to thank Professor Akihiro Higashitani, Department of Pure and Applied Mathematics, Osaka University, for his constant support and valuable comments that greatly improved the manuscript.
\end{ack}


\section{Unicyclic graph $G_{n,k,c}$}\label{sec:3}

Let $G_{n,k,c}:= G(n;k,\dots , k,k+c)$, where $1\leq c < \frac{2n(k+1)}{n-3}$. That is, $G_{n,k,c}$ is the unicyclic graph consisting of an odd cycle $C_{n}=\{a_{1},a_{2},\dots , a_{n}\}$, with $k$ number of pendant vertices adjacent to each of the vertices $a_{i},\ 1\leq i\leq n-1$ and $k+c$ number of pendant vertices adjacent to vertex $a_{n}$. For illustration, see Figure \ref{fig1a}. The number of vertices and edges of the graph $G_{n,k,c}$ is $p=q=n(k+1)+c$. Let the vertex set $V(G_{n,k,c})$ be $$V(C_{n})\cup \big\{a_{i,j}\colon 1\leq i\leq n-1, 1\leq j\leq k\big\}\cup \big\{a_{n,j}\colon 1\leq j\leq k+c\big\}$$ and let the edge set $E(G_{n,k,c})$ be
\begin{equation*}
\begin{split}
& E(C_{n})\cup\big\{a_{i}a_{i,j}\colon 1\leq i\leq n-1,1\leq j\leq k\big\}\cup\big\{a_{n}a_{n,j}\colon 1\leq j \leq k+c\big\}.
\end{split}
\end{equation*}

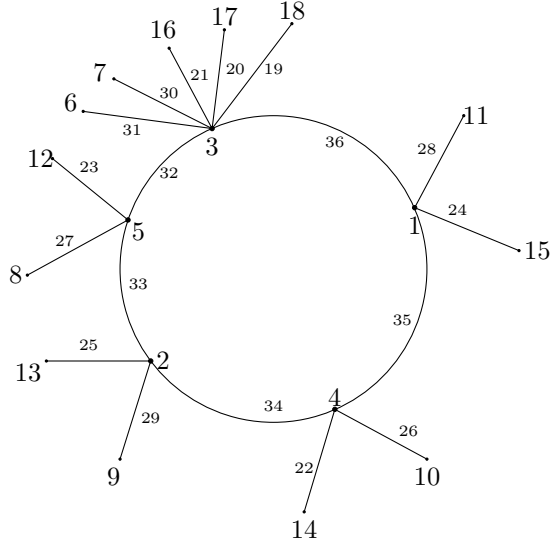
\begin{figure}[ht]
\centering
\begin{tikzpicture}[scale=0.816]
\filldraw[color=black!100, fill=white!5,  thin](6,3) circle (2.5);
\filldraw [black] (5,5.29) circle (1pt);
\filldraw [black] (7,0.712) circle (1pt);
\filldraw [black] (3.63,3.8) circle (1pt);
\filldraw [black] (8.3,4) circle (1pt);
\filldraw [black] (4,1.5) circle (1pt);
\filldraw [black] (8.5,-0.1) circle (0.5pt);
\draw[black, thin] (5,5.29) -- (4.3,6.6);
\filldraw [black] (4.3,6.6) circle (0.5pt);
\draw[black, thin] (5,5.29) -- (5.2,6.9);
\filldraw [black] (5.2,6.9) circle (0.5pt);
\draw[black, thin] (5,5.29) -- (6.3,7);
\draw[black, thin] (5,5.29) -- (2.9,5.57);
\draw[black, thin] (5,5.29) -- (3.4,6.1);
\draw[black, thin] (8.3,4) -- (10,3.3);
\draw[black, thin] (8.3,4) -- (9.1,5.5);
\draw[black, thin] (7,0.712) -- (6.5,-0.96);
\draw[black, thin] (7,0.712) -- (8.5,-0.1);
\draw[black, thin] (4,1.5) -- (3.5,-0.1);
\draw[black, thin] (4,1.5) -- (2.3,1.5);
\draw[black, thin] (3.63,3.8) -- (1.99,2.9);
\draw[black, thin] (3.63,3.8) -- (2.4,4.8);
\filldraw [black] (2.3,1.5) circle (0.5pt);
\filldraw [black] (3.5,-0.1) circle (0.5pt);
\filldraw [black] (6.3,7) circle (0.5pt);
\filldraw [black] (6.5,-0.96) circle (0.5pt);
\filldraw [black] (9.1,5.5) circle (0.5pt);
\filldraw [black] (1.99,2.9) circle (0.5pt);
\filldraw [black] (2.4,4.8) circle (0.5pt);
\filldraw [black] (2.9,5.57) circle (0.5pt);
\filldraw [black] (3.4,6.1) circle (0.5pt);
\filldraw [black] (10,3.3) circle (0.5pt);
\filldraw [black] (5,5.29) node[anchor=north] {$3$};
\filldraw [black] (8.3,4) node[anchor=north] {$1$};
\filldraw [black] (7,1.2) node[anchor=north] {$4$};
\filldraw [black] (4.2,1.8) node[anchor=north] {$2$};
\filldraw [black] (3.8,3.9) node[anchor=north] {$5$};
\filldraw [black] (8.1,2.4) node[anchor=north] {{\tiny  $35$}};
\filldraw [black] (7,5.3) node[anchor=north] {{\tiny $36$}};
\filldraw [black] (4.3,4.8) node[anchor=north] {{\tiny $32$}};
\filldraw [black] (3.8,3) node[anchor=north] {{\tiny $33$}};
\filldraw [black] (6,1) node[anchor=north] {{\tiny $34$}};
\filldraw [black] (2.7,6) node[anchor=north] {$6$};
\filldraw [black] (3.7,5.5)  node[anchor=north] {{\tiny  $31$}};
\filldraw [black] (3.17,6.5) node[anchor=north] {$7$};
\filldraw [black] (4.3,6.1)  node[anchor=north] {{\tiny $30$}};
\filldraw [black] (4.2,7.2)node[anchor=north] {$16$};
\filldraw [black] (4.8,6.4)  node[anchor=north] {{\tiny $21$}};
\filldraw [black] (5.2,7.4) node[anchor=north] {$17$};
\filldraw [black] (5.38,6.5)  node[anchor=north] {{\tiny  $20$}};
\filldraw [black] (6.3,7.5) node[anchor=north] {$18$};
\filldraw [black] (6,6.5)  node[anchor=north] {{\tiny $19$}};
\filldraw [black] (9.3,5.8) node[anchor=north] {$11$};
\filldraw [black] (8.5,5.2)  node[anchor=north] {{\tiny $28$}};
\filldraw [black] (9,4.2) node[anchor=north] {{\tiny $24$}};
\filldraw [black]  (10.3,3.6) node[anchor=north] {$15$};
\filldraw [black] (8.5,-0.1) node[anchor=north] {$10$};
\filldraw [black] (8.2,0.6)  node[anchor=north] {{\tiny $26$}};
\filldraw [black] (6.5,-0.96) node[anchor=north] {$14$};
\filldraw [black] (6.5,0)  node[anchor=north] {{\tiny $22$}};
\filldraw [black] (3.4,-0.1) node[anchor=north] {$9$};
\filldraw [black] (4,0.8)  node[anchor=north] {{\tiny $29$}};
\filldraw [black] (2,1.6) node[anchor=north] {$13$};
\filldraw [black] (2.99,2)  node[anchor=north] {{\tiny $25$}};
\filldraw [black] (1.8,3.2) node[anchor=north] {$8$};
\filldraw [black] (2.6,3.7)  node[anchor=north] {{\tiny $27$}};
\filldraw [black] (2.2,5.1)  node[anchor=north] {$12$};
\filldraw [black] (3,4.9)  node[anchor=north] {{\tiny $23$}};
\end{tikzpicture}
\caption{The graph $G_{5,2,3}$}
\label{fig1a}
\end{figure}

\begin{theorem}\label{thm:k+c}
The unicyclic graph $G_{n,k,c}$ is a super edge-magic total graph with super edge-magic total strength given by $$s m(G_{n,k,c})=2n(k+1)+2c+\frac{n+3}{2}.$$
\end{theorem}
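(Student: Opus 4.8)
The plan is to bracket the magic constant of an arbitrary super edge-magic total labeling of $G_{n,k,c}$ from below by a counting argument and from above by an explicit construction, the two bounds coinciding at $2n(k+1)+2c+\tfrac{n+3}{2}=2q+\tfrac{n+3}{2}$ (recall $p=q=n(k+1)+c$).

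\textbf{Lower bound.} Fix any super edge-magic total labeling $f$ and set $s=\min\{f(u)+f(v):uv\in E(G_{n,k,c})\}$, so that $c(f)=p+q+s$ by Lemma~\ref{lem:SEM} and $f(E(G_{n,k,c}))=\{p+1,\dots,p+q\}$. Feeding these into identity \eqref{eq:1}, together with the degree sequence (every pendant vertex has degree $1$, each $a_i$ with $i<n$ has degree $k+2$, and $a_n$ has degree $k+c+2$, whence $\sum_{v}\mathrm{deg}(v)f(v)=\tfrac{p(p+1)}{2}+(k+1)\sum_{i=1}^{n}f(a_i)+c\,f(a_n)$), and simplifying using $p=q$, one obtains the identity
\[
s\;=\;1+\frac{(k+1)\sum_{i=1}^{n}f(a_i)+c\,f(a_n)}{q}.
\]
Since $f(a_1),\dots,f(a_n)$ are distinct elements of $\{1,\dots,p\}$ and $f(a_n)$ is among them, the numerator is at least $(k+1)\tfrac{n(n+1)}{2}+c$, which yields
\[
s\;\ge\;\frac{n(k+1)\cdot\tfrac{n+3}{2}+2c}{n(k+1)+c}.
\]
A short manipulation shows the right-hand side exceeds $\tfrac{n+1}{2}$ precisely when $n(k+1)>c\cdot\tfrac{n-3}{2}$, i.e.\ precisely when $c<\tfrac{2n(k+1)}{n-3}$ --- exactly the hypothesis of the theorem. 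As $n$ is odd, $\tfrac{n+1}{2}$ is an integer, and since $s$ is an integer it follows that $s\ge\tfrac{n+3}{2}$, hence $c(f)\ge 2q+\tfrac{n+3}{2}$ for every such $f$.

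\textbf{Upper bound.} By Lemma~\ref{lem:SEM} it suffices to produce a bijection $f\colon V(G_{n,k,c})\to\{1,\dots,p\}$ whose edge-sum set equals $\{\tfrac{n+3}{2},\tfrac{n+3}{2}+1,\dots,\tfrac{n+3}{2}+q-1\}$; then $f$ extends to a super edge-magic total labeling with magic constant $p+q+\tfrac{n+3}{2}=2n(k+1)+2c+\tfrac{n+3}{2}$, which also proves $G_{n,k,c}$ super edge-magic total. I would label the cycle vertices by $\{1,\dots,n\}$ in the standard interleaved cyclic order $1,\tfrac{n+3}{2},2,\tfrac{n+5}{2},\dots,\tfrac{n-1}{2},n,\tfrac{n+1}{2}$, rotated so that $f(a_n)=\tfrac{n+1}{2}$, making the $n$ cycle-edge sums fill $\{\tfrac{n+3}{2},\dots,\tfrac{3n+1}{2}\}$. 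The remaining labels $\{n+1,\dots,p\}$ are handed to the pendant vertices in $k+1$ blocks: for $j=1,\dots,k$, a ``round'' that distributes the $n-1$ consecutive labels $n+1+(j-1)(n-1),\dots,n+j(n-1)$ over the $n-1$ non-special cycle vertices by attaching the label $n+1+(j-1)(n-1)+i$ to the cycle vertex bearing label $\sigma(i)$ for $i=0,\dots,n-2$; and a last block consisting of all $k+c$ remaining labels $n+k(n-1)+1,\dots,p$, all attached to $a_n$. Here $\sigma\colon\{0,\dots,n-2\}\to\{1,\dots,n\}\setminus\{\tfrac{n+1}{2}\}$ is any bijection with $\{\,i+\sigma(i):0\le i\le n-2\,\}=\{\tfrac{n+1}{2},\dots,\tfrac{3n-3}{2}\}$; writing $n=2m+1$, one may take $\sigma(2t)=2m+1-t$ and $\sigma(2t+1)=m-t$ for $0\le t\le m-1$. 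With such a $\sigma$, round $j$ contributes the block of sums $\{\tfrac{3n+3}{2}+(j-1)(n-1),\dots,\tfrac{3n+1}{2}+j(n-1)\}$ and the final block contributes $\{\tfrac{3n+3}{2}+k(n-1),\dots,\tfrac{3n+1}{2}+k(n-1)+(k+c)\}$, so that the cycle block, the $k$ round blocks, and the final block abut one another and their union is exactly $\{\tfrac{n+3}{2},\dots,\tfrac{n+3}{2}+q-1\}$, as required.

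\textbf{Main obstacle.} The lower bound should be routine once one spots that $c<\tfrac{2n(k+1)}{n-3}$ is precisely the condition forcing the averaging estimate for $s$ to round up past $\tfrac{n+1}{2}$. The real work lies in the construction: the cyclic labeling, the bijection $\sigma$, and the exact placement of the final $a_n$-block must be chosen so that consecutive blocks of edge sums interlock with no gap and no repetition, and then every block endpoint has to be verified; an ill-chosen $\sigma$ or a misaligned final block would leave a hole in the edge-sum set and break the argument.
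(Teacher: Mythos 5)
Your proposal is correct and follows essentially the same route as the paper: the lower bound is the same counting-plus-integrality argument based on identity \eqref{eq:1} (with the hypothesis $c<\tfrac{2n(k+1)}{n-3}$ entering exactly where it does in the paper), and the upper bound exhibits a vertex labeling whose edge sums form a consecutive run with minimum $\tfrac{n+3}{2}$ and then invokes Lemma~\ref{lem:SEM}. The only difference is in the details of the construction: the paper reuses the known labeling of $G(n;k,\dots ,k)$ and appends the $c$ extra pendant labels at the top (via Theorem~\ref{thm:new}), whereas you distribute the non-special pendant labels in rounds via your bijection $\sigma$ and give $a_n$ the top $k+c$ labels; your $\sigma$ and block endpoints check out, producing exactly the run $\big\{\tfrac{n+3}{2},\dots ,\tfrac{n+1}{2}+p\big\}$, so both labelings yield the same magic constant.
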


\begin{proof}

By Theorem \ref{thm:uni}, the graph $G_{n,k,c}:= G(n;k,\dots ,k ,k+c)$, $1\leq c < \frac{2n(k+1)}{n-3}$, is super edge-magic total. From \eqref{eq:1}, for any super edge-magic total labeling $f$ of $G_{n,k,c}$, we have   
\begin{equation*}
\begin{split}
q c(f)&=\sum_{v\in V(G_{n,k,c})}\deg(v)f(v)+\sum_{e\in E(G_{n,k,c})}f(e)\\
& = q(2q+1)+ (k+1)\sum_{a_{i}\in V(C_{n})}f(a_{i})+c f(a_{n}) .
\end{split}
\end{equation*}

By assigning smaller labels to vertices with higher degrees, we get the least possible value of $c(f)$ as
\begin{equation*}
\begin{split}
& c(f)\geq 2q+1+\frac{(k+1)n(n+1)}{2q}+\frac{c}{q}.
\end{split}
\end{equation*}

Hence, we have $$sm(G_{n,k,c})\geq 2q+1+\frac{n(k+1)(n+1)}{2q}+\frac{c}{q}.$$

Since $sm(G_{n,k,c})$ is an integer, we consider the integer part of $\bigg( \frac{n(k+1)(n+1)}{2q}+\frac{c}{q}\bigg)$. We have $\frac{n+1}{2}-\bigg( \frac{n(k+1)(n+1)}{2q}+\frac{c}{q}\bigg)=\frac{(n-1)c}{2(n(k+1)+c)}$. Since $1\leq c < \frac{2n(k+1)}{n-3}$, we have $(n-1)c < 2(n(k+1)+c)$. Therefore, $0< \frac{(n-1)c}{2(n(k+1)+c)} < 1$. 
Hence,
\begin{align*}  
s m(G_{n,k,c})&\geq  2q+1+\frac{(n+1)}{2}\\
& = 2n(k+1)+2c+\frac{n+3}{2}.
\end{align*}
That is, 
\begin{equation}\label{eq:lb_k+c}
s m(G_{n,k,c})\geq 2n(k+1)+2c+\frac{n+3}{2}.
\end{equation}

By Theorem \ref{thm:new} and Theorem \ref{thm:uni}, the graph $G_{n,k,c}$ can be seen as the super edge-magic total graph constructed from the graph $G(n;k,\dots , k)$ with the super edge-magic total labeling  $f^{'}$ of $G(n;k,\dots , k)$ defined as follows.

For $1\leq i \leq n,$
\begin{equation*}
\begin{split}
&f^{'}(a_{i})= 
  \begin{cases} 
   \displaystyle{ \frac{i+1}{2}}& \text{ if } i \text{ is odd},  \\ \\
   \displaystyle{\frac{n+i+1}{2}}&  \text{ if } i \text{ is even}.
  \end{cases}\\
& f^{'}(a_{i,j})= n(k+1)-(n-1)(j-1)-(i-1),\ 1\leq i\leq n-1,\ 1\leq j\leq k.\\
& f^{'}(a_{n,k})= n+j,\ 1\leq j\leq k. 
\end{split}
\end{equation*}

Now, we consider a vertex labeling $f\colon V(G_{n,k,c})  \longrightarrow \{1,\dots , p\}$ as follows:

\begin{equation}\label{eq:semGn}
\begin{split}
&f(v)= 
  \begin{cases} 
   \displaystyle{ f^{'}(v)}& \text{ if } v\in V(G(n;k,\dots , k)),  \\ \\
   \displaystyle{n(k+1)+j-k}&  \text{ if } v= a_{n,j}, \text{ for } k+1\leq j \leq k+c.
  \end{cases}
\end{split}
\end{equation}

As per the labeling defined in \eqref{eq:semGn}, for any $u v\in E(G_{n,k,c})$ we observe the following.
\begin{itemize}
\item If $u, v\in V(G_{n,k,c})$, since $f^{'}$ is a super edge-magic total labeling of the graph $G(n;k,\dots , k)$, then $\{f(u)+f(v)\}=\{f^{'}(u)+f^{'}(v)\}$ is a consecutive sequence with highest element $n(k+1)+\frac{n+1}{2}$.
\item If $u=a_{n}$ and $v=a_{n,j}$, for $ k+1\leq j\leq k+c,$ then  $\{f(u)+f(v)\}=\big\{\frac{n+1}{2}+ n(k+1)+1 , \dots , \frac{n+1}{2}+ n(k+1)+ c \big\}$ is a consecutive sequence.
\end{itemize}

Therefore, we observe that $\{f(u)+f(v)\colon u v\in E(G_{n,k,c})\}$ is a consecutive sequence and we have $\min \{f(u)+f(v)\colon u v\in E(G_{n,k,c})\}= \frac{n+3}{2}$. By Lemma \ref{lem:SEM}, the vertex labeling $f$ extends to a super edge-magic total labeling of $G_{n,k,c}$ with $c(f)= 2n(k+1)+2c+\frac{n+3}{2}$. Hence,

\begin{equation}\label{eq:ub_k+c}
s m(G_{n,k,c})\leq 2n(k+1)+2c+\frac{n+3}{2}.
\end{equation}

From \eqref{eq:lb_k+c} and \eqref{eq:ub_k+c}, we have
\begin{equation*}
 s m(G_{n,k,c}) = 2n(k+1)+2c+\frac{n+3}{2}.
\end{equation*}

\end{proof}

\begin{example}
Super edge-magic total labeling of the graph $G_{5,2,3}$ with strength $s m(G_{5,2,3})= 40$, is illustrated in Figure \ref{fig1a}. 
\end{example}

\begin{example}
Super edge-magic total labeling of $G_{9,3,4}$ with $s m(G_{9,3,4})= 86$ is illustrated in Figure \ref{fig1}.
\end{example}
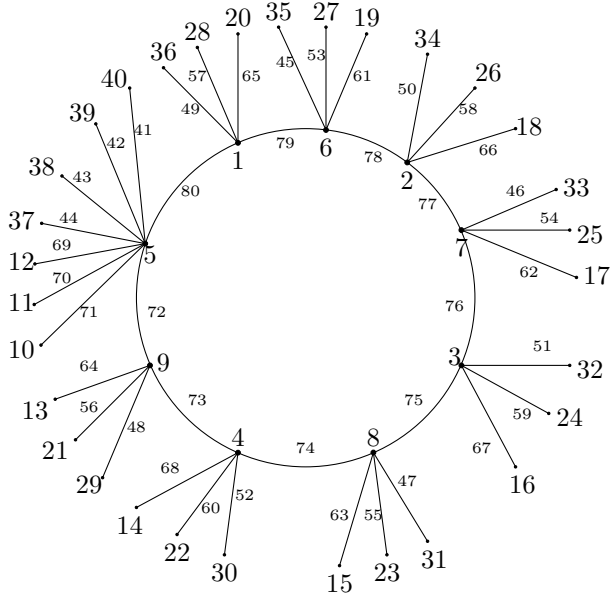
\begin{figure}[ht]
\centering
\begin{tikzpicture}[scale=0.9]
\filldraw[color=black!100, fill=white!5,  thin](6,3) circle (2.5);
\filldraw [black] (3.7,2) circle (1pt);
\filldraw [black] (8.3,2) circle (1pt);
\filldraw [black] (5,5.29) circle (1pt);
\filldraw [black] (5,0.71) circle (1pt);
\filldraw [black] (6.3,5.48) circle (1pt);
\filldraw [black] (7,0.712) circle (1pt);
\filldraw [black] (7.5,5) circle (1pt);
\filldraw [black] (3.63,3.8) circle (1pt);
\filldraw [black] (8.3,4) circle (1pt);
\draw[black, thin] (3.7,2) -- (3,0.34);
\draw[black, thin] (3.7,2) -- (2.6,0.9);
\draw[black, thin] (3.7,2) -- (2.3,1.5);
\draw[black, thin] (8.3,2) -- (9.1,0.5);
\draw[black, thin] (8.3,2) -- (9.59,1.29);
\draw[black, thin] (8.3,2) -- (9.9,2);
\draw[black, thin] (5,5.29) -- (3.9,6.4);
\draw[black, thin] (5,5.29) -- (4.4,6.7);
\draw[black, thin] (5,5.29) -- (5,6.9);
\draw[black, thin] (5,0.71) -- (3.5,-0.1);
\draw[black, thin] (5,0.71) -- (4.1,-0.5);
\draw[black, thin] (5,0.71) -- (4.8,-0.8);
\draw[black, thin] (6.3,5.48) -- (5.6,7);
\draw[black, thin] (6.3,5.48) -- (6.3,7);
\draw[black, thin] (6.3,5.48) -- (6.9,6.9);
\draw[black, thin] (7,0.712) -- (6.5,-0.96);
\draw[black, thin] (7,0.712) -- (7.2,-0.8);
\draw[black, thin] (7,0.712) -- (7.8,-0.6);
\draw[black, thin] (7.5,5) -- (8.5,6.1);
\draw[black, thin] (7.5,5) -- (7.8,6.6);
\draw[black, thin] (7.5,5) -- (9.1,5.5);
\draw[black, thin] (3.63,3.8) -- (2.09,2.3);
\draw[black, thin] (3.63,3.8) -- (1.99,2.9);
\draw[black, thin] (3.63,3.8) -- (2,3.5);
\draw[black, thin] (3.63,3.8) -- (2.1,4.1);
\draw[black, thin] (3.63,3.8) -- (2.4,4.8);
\draw[black, thin] (3.63,3.8) -- (2.9,5.57);
\draw[black, thin] (3.63,3.8) -- (3.4,6.1);
\draw[black, thin] (8.3,4) -- (9.9,4);
\draw[black, thin] (8.3,4) -- (10,3.3);
\draw[black, thin] (8.3,4) -- (9.7,4.6);
\filldraw [black] (3,0.34) circle (0.5pt);
\filldraw [black] (2.6,0.9) circle (0.5pt);
\filldraw [black] (2.3,1.5) circle (0.5pt);
\filldraw [black] (9.1,0.5) circle (0.5pt);
\filldraw [black] (9.59,1.29) circle (0.5pt);
\filldraw [black] (9.9,2) circle (0.5pt);
\filldraw [black] (3.9,6.4) circle (0.5pt);
\filldraw [black] (4.4,6.7) circle (0.5pt);
\filldraw [black] (5,6.9) circle (0.5pt);
\filldraw [black] (3.5,-0.1) circle (0.5pt);
\filldraw [black] (4.1,-0.5) circle (0.5pt);
\filldraw [black] (4.8,-0.8) circle (0.5pt);
\filldraw [black] (5.6,7) circle (0.5pt);
\filldraw [black] (6.3,7) circle (0.5pt);
\filldraw [black] (6.9,6.9) circle (0.5pt);
\filldraw [black] (6.5,-0.96) circle (0.5pt);
\filldraw [black] (7.2,-0.8) circle (0.5pt);
\filldraw [black] (7.8,-0.6) circle (0.5pt);
\filldraw [black] (8.5,6.1) circle (0.5pt);
\filldraw [black] (7.8,6.6) circle (0.5pt);
\filldraw [black] (9.1,5.5) circle (0.5pt);
\filldraw [black] (2.09,2.3) circle (0.5pt);
\filldraw [black] (1.99,2.9) circle (0.5pt);
\filldraw [black] (2,3.5) circle (0.5pt);
\filldraw [black] (2.1,4.1) circle (0.5pt);
\filldraw [black] (2.4,4.8) circle (0.5pt);
\filldraw [black] (2.9,5.57) circle (0.5pt);
\filldraw [black] (3.4,6.1) circle (0.5pt);
\filldraw [black] (9.9,4) circle (0.5pt);
\filldraw [black] (10,3.3) circle (0.5pt);
\filldraw [black] (9.7,4.6) circle (0.5pt);
\filldraw [black] (5,5.29) node[anchor=north] {$1$};
\filldraw [black] (6.3,5.48) node[anchor=north] {$6$};
\filldraw [black] (7.5,5) node[anchor=north] {$2$};
\filldraw [black] (8.3,4) node[anchor=north] {$7$};
\filldraw [black] (8.2,2.4) node[anchor=north] {$3$};
\filldraw [black] (7,1.2) node[anchor=north] {$8$};
\filldraw [black] (5,1.2) node[anchor=north] {$4$};
\filldraw [black] (3.9,2.3) node[anchor=north] {$9$};
\filldraw [black] (3.7,3.9) node[anchor=north] {$5$};
\filldraw [black] (3.9,6.9)  node[anchor=north] {$36$};
\filldraw [black] (4.4,7.2)node[anchor=north] {$28$};
\filldraw [black] (5,7.4) node[anchor=north] {$20$};
\filldraw [black] (4.3,6)  node[anchor=north] {{\tiny $49$}};
\filldraw [black] (4.4,6.5)node[anchor=north] {{\tiny $57$}};
\filldraw [black] (5.2,6.5) node[anchor=north] {{\tiny $65$}};
\filldraw [black] (5.6,7.5)  node[anchor=north] {$35$};
\filldraw [black] (6.3,7.5) node[anchor=north] {$27$};
\filldraw [black] (6.9,7.4) node[anchor=north] {$19$};
\filldraw [black] (5.7,6.7)  node[anchor=north] {{\tiny $45$}};
\filldraw [black] (6.16,6.8) node[anchor=north] {{\tiny $53$}};
\filldraw [black] (6.84,6.5) node[anchor=north] {{\tiny $61$}};
\filldraw [black] (8.7,6.6) node[anchor=north] {$26$};
\filldraw [black]  (7.8,7.1) node[anchor=north] {$34$};
\filldraw [black] (9.3,5.8) node[anchor=north] {$18$};
\filldraw [black] (8.4,6) node[anchor=north] {{\tiny $58$}};
\filldraw [black]  (7.5,6.3) node[anchor=north] {{\tiny $50$}};
\filldraw [black] (8.7,5.4) node[anchor=north] {{\tiny $66$}};
\filldraw [black] (10.2,4.2) node[anchor=north] {$25$};
\filldraw [black]  (10.3,3.6) node[anchor=north] {$17$};
\filldraw [black] (10,4.9)  node[anchor=north] {$33$};
\filldraw [black] (9.6,4.4) node[anchor=north] {{\tiny $54$}};
\filldraw [black]  (9.3,3.6) node[anchor=north] {{\tiny $62$}};
\filldraw [black] (9.1,4.8)  node[anchor=north] {{\tiny $46$}};
\filldraw [black] (9.2,0.5) node[anchor=north] {$16$};
\filldraw [black]  (9.9,1.5) node[anchor=north] {$24$};
\filldraw [black] (10.2,2.2)  node[anchor=north] {$32$};
\filldraw [black] (8.6,1) node[anchor=north] {{\tiny $67$}};
\filldraw [black]  (9.2,1.5) node[anchor=north] {{\tiny $59$}};
\filldraw [black] (9.5,2.5)  node[anchor=north] {{\tiny $51$}};
\filldraw [black] (6.5,-0.96) node[anchor=north] {$15$};
\filldraw [black]  (7.2,-0.8) node[anchor=north] {$23$};
\filldraw [black] (7.9,-0.6) node[anchor=north] {$31$};
\filldraw [black] (6.5,0) node[anchor=north] {{\tiny $63$}};
\filldraw [black]  (7,0) node[anchor=north] {{\tiny $55$}};
\filldraw [black] (7.5,0.5) node[anchor=north] {{\tiny $47$}};
\filldraw [black] (3.4,-0.1) node[anchor=north] {$14$};
\filldraw [black]  (4.1,-0.5) node[anchor=north] {$22$};
\filldraw [black] (4.8,-0.8) node[anchor=north] {$30$};
\filldraw [black] (4,0.7) node[anchor=north] {{\tiny $68$}};
\filldraw [black]  (4.6,0.1) node[anchor=north] {{\tiny $60$}};
\filldraw [black] (5.1,0.3) node[anchor=north] {{\tiny $52$}};
\filldraw [black] (2.8,0.5)  node[anchor=north] {$29$};
\filldraw [black] (2.3,1) node[anchor=north] {$21$};
\filldraw [black] (2,1.6) node[anchor=north] {$13$};
\filldraw [black] (3.5,1.3)  node[anchor=north] {{\tiny $48$}};
\filldraw [black] (2.8,1.6) node[anchor=north] {{\tiny $56$}};
\filldraw [black] (2.8,2.2) node[anchor=north] {{\tiny $64$}};
\filldraw [black] (1.8,2.5) node[anchor=north] {$10$};
\filldraw [black] (1.8,3.2) node[anchor=north] {$11$};
\filldraw [black]  (1.8,3.8) node[anchor=north] {$12$};
\filldraw [black] (1.8,4.4) node[anchor=north] {$37$};
\filldraw [black] (2.1,5.2)  node[anchor=north] {$38$};
\filldraw [black] (2.7,6) node[anchor=north] {$39$};
\filldraw [black] (3.17,6.5) node[anchor=north] {$40$};
\filldraw [black] (2.8,3) node[anchor=north] {{\tiny $71$}};
\filldraw [black] (2.4,3.5) node[anchor=north] {{\tiny $70$}};
\filldraw [black]  (2.4,4) node[anchor=north] {{\tiny $69$}};
\filldraw [black] (2.5,4.4) node[anchor=north] {{\tiny $44$}};
\filldraw [black] (2.7,5)  node[anchor=north] {{\tiny $43$}};
\filldraw [black] (3.2,5.5) node[anchor=north] {{\tiny $42$}};
\filldraw [black] (3.6,5.7) node[anchor=north] {{\tiny $41$}};
\filldraw [black] (4.3,4.8) node[anchor=north] {{\tiny $80$}};
\filldraw [black] (5.7,5.5) node[anchor=north] {{\tiny $79$}};
\filldraw [black] (7,5.3) node[anchor=north] {{\tiny $78$}};
\filldraw [black] (7.8,4.6) node[anchor=north] {{\tiny $77$}};
\filldraw [black] (3.8,3) node[anchor=north] {{\tiny $72$}};
\filldraw [black] (4.4,1.7) node[anchor=north] {{\tiny $73$}};
\filldraw [black] (6,1) node[anchor=north] {{\tiny $74$}};
\filldraw [black] (7.6,1.7) node[anchor=north] {{\tiny $75$}};
\filldraw [black] (8.2,3.1) node[anchor=north] {{\tiny $76$}};
\end{tikzpicture}
\caption{The graph $G_{9,3,4}$}
\label{fig1}
\end{figure}


\section{Unicyclic graph $G_{n,k,-c}$}\label{sec:4}

Let us consider the unicyclic graph $G_{n,k,-c}:= G(n;k,\dots ,k,k-c)$, $1\leq c\leq k$. For example, see Figure \ref{fig2a}. For $G_{n,k,-c}$, the number of vertices and edges are $p=q=n(k+1)-c$. 

Let $V(G_{n,k,-c})=V(C_{n})\cup \big\{a_{i,j}\colon 1\leq i\leq n-1, 1\leq j\leq k\big\}\cup \big\{a_{n,j}\colon 1\leq j\leq k-c\big\} $ and let the edge set be equal to  $$E(C_{n})\cup\big\{a_{i}a_{i,j}\colon 1\leq i\leq n-1,1\leq j\leq k\big\}\cup\big\{a_{n}a_{n,j}\colon 1\leq j \leq k-c\big\},$$
where $1\leq c\leq k$. 
\begin{figure}[ht]
\centering
\begin{tikzpicture}[scale=0.9]
\filldraw[color=black!100, fill=white!5,  thin](6,3) circle (2.5);
\filldraw [black] (5,5.29) circle (1pt);
\filldraw [black] (7,0.712) circle (1pt);
\filldraw [black] (3.63,3.8) circle (1pt);
\filldraw [black] (8.3,4) circle (1pt);
\filldraw [black] (4,1.5) circle (1pt);
\filldraw [black] (8.5,-0.1) circle (0.5pt); 
\draw[black, thin] (5,5.29) -- (3.9,6.4);
\draw[black, thin] (5,5.29) -- (5,6.9);
\draw[black, thin] (5,5.29) -- (6.3,7);
\draw[black, thin] (5,5.29) -- (2.9,5.57);
\draw[black, thin] (8.3,4) -- (10,3.3);
\draw[black, thin] (8.3,4) -- (9.6,4.8);
\filldraw [black] (9.6,4.8) circle (0.5pt); 
\draw[black, thin] (8.3,4) -- (8.5,6.1);
\draw[black, thin] (8.3,4) -- (9.9,2);
\draw[black, thin] (7,0.712) -- (6.5,-0.96);
\draw[black, thin] (7,0.712) -- (7.2,-0.8);
\draw[black, thin] (7,0.712) -- (9.1,0.5);
\draw[black, thin] (7,0.712) -- (8.5,-0.1);
\draw[black, thin] (4,1.5)  -- (4.1,-0.5);
\draw[black, thin] (4,1.5) -- (3,0.34);
\draw[black, thin] (4,1.5) -- (2.3,1.5);
\draw[black, thin] (4,1.5) -- (2.09,2.3);
\draw[black, thin] (3.63,3.8) -- (2,3.5);
\draw[black, thin] (3.63,3.8) -- (2.4,4.8);
\filldraw [black] (2.7,6) node[anchor=north] {$11$};
\filldraw [black] (3.8,5.9) node[anchor=north] {{\tiny $38$}};
\filldraw [black] (3.9,6.9)  node[anchor=north] {$15$};
\filldraw [black] (4.6,6.3) node[anchor=north] {{\tiny $34$}};
\filldraw [black] (5,7.4) node[anchor=north] {$19$};
\filldraw [black] (5.24,6.5) node[anchor=north] {{\tiny $30$}};
\filldraw [black] (6.3,7.5) node[anchor=north] {$23$};
\filldraw [black] (6.2,6.6) node[anchor=north] {{\tiny $26$}};
\filldraw [black] (8.7,6.6) node[anchor=north] {$10$};
\filldraw [black] (8.2,5.7) node[anchor=north] {{\tiny $36$}};
\filldraw [black] (9.9,5.1)   node[anchor=north] {$14$};
\filldraw [black] (9.1,4.5) node[anchor=north] {{\tiny $32$}};
\filldraw [black]  (10.3,3.6) node[anchor=north] {$18$};
\filldraw [black] (9.4,3.6) node[anchor=north] {{\tiny $28$}};
\filldraw [black] (10.2,2.2)  node[anchor=north] {$22$};
\filldraw [black] (9.4,2.5) node[anchor=north] {{\tiny $24$}};
\filldraw [black] (9.2,0.5) node[anchor=north] {$9$};
\filldraw [black] (8.6,0.98) node[anchor=north] {{\tiny $39$}};
\filldraw [black] (8.5,-0.1) node[anchor=north] {$13$};
\filldraw [black] (8.3,0.5) node[anchor=north] {{\tiny $35$}};
\filldraw [black]  (7.2,-0.8) node[anchor=north] {$17$};
\filldraw [black] (7.4,0) node[anchor=north] {{\tiny $31$}};
\filldraw [black] (6.5,-0.96) node[anchor=north] {$21$};
\filldraw [black] (6.4,-0.2) node[anchor=north] {{\tiny $27$}};
\filldraw [black]  (4.1,-0.5) node[anchor=north] {$8$};
\filldraw [black] (4.3,0.3) node[anchor=north] {{\tiny $37$}};
\filldraw [black] (2.8,0.5)  node[anchor=north] {$12$};
\filldraw [black] (3.5,0.8) node[anchor=north] {{\tiny $33$}};
\filldraw [black] (2,1.6) node[anchor=north] {$16$};
\filldraw [black] (2.9,1.6) node[anchor=north] {{\tiny $29$}};
\filldraw [black] (1.8,2.6) node[anchor=north] {$20$};
\filldraw [black] (2.6,2.6) node[anchor=north] {{\tiny $25$}};
\filldraw [black]  (1.8,3.8) node[anchor=north] {$6$};
\filldraw [black] (2.6,4.1) node[anchor=north] {{\tiny $41$}};
\filldraw [black] (2.2,5.1)  node[anchor=north] {$7$};
\filldraw [black] (2.9,4.9) node[anchor=north] {{\tiny $40$}};
\filldraw [black] (3,0.34) circle (0.5pt);
\filldraw [black] (2.3,1.5) circle (0.5pt);
\filldraw [black] (4.1,-0.5) circle (0.5pt);
\filldraw [black] (9.1,0.5) circle (0.5pt);
\filldraw [black] (9.9,2) circle (0.5pt);
\filldraw [black] (8.5,6.1) circle (0.5pt);
\filldraw [black] (3.9,6.4) circle (0.5pt);
\filldraw [black] (5,6.9) circle (0.5pt);
\filldraw [black] (6.3,7) circle (0.5pt);
\filldraw [black] (6.5,-0.96) circle (0.5pt);
\filldraw [black] (7.2,-0.8) circle (0.5pt);
\filldraw [black] (2.09,2.3) circle (0.5pt);
\filldraw [black] (2,3.5) circle (0.5pt);
\filldraw [black] (2.4,4.8) circle (0.5pt);
\filldraw [black] (2.9,5.57) circle (0.5pt);
\filldraw [black] (10,3.3) circle (0.5pt);
\filldraw [black] (5,5.29) node[anchor=north] {$1$};
\filldraw [black] (8.3,4) node[anchor=north] {$4$};
\filldraw [black] (7,1.2) node[anchor=north] {$2$};
\filldraw [black] (4.2,1.8) node[anchor=north] {$5$};
\filldraw [black] (3.8,3.9) node[anchor=north] {$3$};
\filldraw [black] (7,5.3) node[anchor=north] {{\tiny  $45$}};
\filldraw [black] (8.1,2.4) node[anchor=north] {{\tiny $44$}};
\filldraw [black] (6,1) node[anchor=north] {{\tiny $43$}};
\filldraw [black] (3.8,3) node[anchor=north] {{\tiny $42$}};
\filldraw [black] (4.3,4.8) node[anchor=north] {{\tiny $46$}};
\end{tikzpicture}
\caption{The graph $G_{5,4,-2}$}
\label{fig2a}
\end{figure}

\begin{theorem}\label{thm:k-c}
The unicyclic graph $G_{n,k,-c}$ is a super edge-magic total graph with super edge-magic total strength $$s m(G_{n,k,-c})=2n(k+1)-2c+\frac{n+3}{2}.$$
\end{theorem}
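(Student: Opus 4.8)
The plan is to follow the template of the proof of Theorem \ref{thm:k+c}: first derive a lower bound for $c(f)$ from the edge-sum identity \eqref{eq:1}, and then produce an explicit vertex labeling realizing it via Lemma \ref{lem:SEM}. Super edge-magic totality of $G_{n,k,-c}$ is immediate from Theorem \ref{thm:uni}, since it is an instance of $G(n;k_1,\dots,k_n)$ with $n$ odd. For the lower bound, let $f$ be any super edge-magic total labeling and recall $p=q=n(k+1)-c$, so that $f(E(G_{n,k,-c}))=\{p+1,\dots,2q\}$. Here every pendant vertex has degree $1$, each $a_i$ with $i\neq n$ has degree $k+2$, and $a_n$ has degree $k-c+2$; writing the pendant part of $\sum_v\deg(v)f(v)$ together with $\sum_{i=1}^n f(a_i)$ as $\frac{p(p+1)}{2}$, identity \eqref{eq:1} collapses to
\[ q\,c(f)=q(2q+1)+(k+1)\sum_{i=1}^{n-1}f(a_i)+(k+1-c)f(a_n). \]

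All coefficients on the right are positive (note $k+1-c\geq 1$), and $f(a_n)$ carries the strictly smallest one, so the right-hand side is minimized by giving $a_1,\dots,a_n$ the labels $1,\dots,n$ with $f(a_n)=n$. This gives $c(f)\geq 2q+1+\frac{(k+1)n(n+1)-2cn}{2q}$, and a one-line computation identifies $\frac{n+1}{2}-\frac{(k+1)n(n+1)-2cn}{2q}$ with $\frac{c(n-1)}{2q}$. Since $c\leq k$ and $n\geq 3$, we have $c(n+1)\leq k(n+1)<2n(k+1)=2q+2c$, hence $0<\frac{c(n-1)}{2q}<1$; thus $c(f)$ is an integer strictly larger than $2q+\frac{n+1}{2}$, forcing $sm(G_{n,k,-c})\geq 2q+\frac{n+3}{2}=2n(k+1)-2c+\frac{n+3}{2}$.

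For the matching upper bound I would take the labeling $f'$ of $G(n;k,\dots,k)$ appearing in the proof of Theorem \ref{thm:k+c}, delete the $c$ pendants $a_{n,k-c+1},\dots,a_{n,k}$ at $a_n$, and decrease every ``high'' label by $c$: keep $f(a_i)=f'(a_i)$ on the cycle, keep $f(a_{n,j})=n+j$ for $1\leq j\leq k-c$, and set $f(a_{i,j})=n(k+1)-(n-1)(j-1)-(i-1)-c$ for $1\leq i\leq n-1$, $1\leq j\leq k$. One checks that $f$ is a bijection onto $\{1,\dots,p\}$: the cycle uses $\{1,\dots,n\}$, the surviving $a_n$-pendants use $\{n+1,\dots,n+k-c\}$, and the remaining pendant labels are the top block $\{n+k+1,\dots,n(k+1)\}$ of $f'$ shifted down by $c$, namely $\{n+k-c+1,\dots,p\}$. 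The resulting edge sums then form three abutting consecutive blocks: the $n$ cycle edges give $\{\frac{n+3}{2},\dots,\frac{3n+1}{2}\}$ (unchanged, since the cycle labels are unchanged), the surviving $a_n$-pendant edges give $\{\frac{3n+3}{2},\dots,\frac{3n+1}{2}+k-c\}$, and the pendant edges at $a_1,\dots,a_{n-1}$ give the $f'$-block $\{\frac{3n+3}{2}+k,\dots,n(k+1)+\frac{n+1}{2}\}$ shifted down by $c$. Their union is $\{\frac{n+3}{2},\dots,p+\frac{n+1}{2}\}$, a run of $q$ consecutive integers with minimum $\frac{n+3}{2}$, so by Lemma \ref{lem:SEM}, $f$ extends to a super edge-magic total labeling with $c(f)=p+q+\frac{n+3}{2}=2n(k+1)-2c+\frac{n+3}{2}$. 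Together with the lower bound this proves the theorem.

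The only genuinely delicate point is the verification in the last paragraph: one must check that deleting the last $c$ pendants at $a_n$ removes precisely the $c$ largest pendant edge sums of $f'$, and that shifting the surviving high labels down by $c$ both keeps the vertex labels an initial segment $\{1,\dots,p\}$ and keeps the surviving pendant edge sums consecutive and correctly aligned with the (unshifted) cycle and $a_n$-pendant sums. The rest — super edge-magic totality and the lower bound — is a routine application of Theorem \ref{thm:uni} and identity \eqref{eq:1}, exactly parallel to Theorem \ref{thm:k+c}.
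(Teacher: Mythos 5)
Your proposal is correct and follows essentially the same route as the paper: the lower bound is the same counting argument via \eqref{eq:1} (the paper invokes the bound of Theorem \ref{thm:uni}, which encodes exactly your computation, including the integrality step with $\frac{(n-1)c}{2q}$), and your labeling $f(a_{i,j})=n(k+1)-(n-1)(j-1)-(i-1)-c$ is algebraically identical to the paper's $f(a_{i,j})=n(k+2)-c-(n-1)j-i$, with the same three abutting blocks of edge sums and Lemma \ref{lem:SEM} finishing the argument. The only quibble is the closing remark that the deleted pendants carry ``the $c$ largest pendant edge sums of $f'$'' (they carry the top $c$ sums of the $a_n$-pendant block, the resulting gap being closed by the shift), but your explicit block verification already handles this correctly.
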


\begin{proof}
By Theorem \ref{thm:uni}, the graph $G_{n,k,-c}$ is super edge-magic total and the lower bound of its super edge-magic total strength is:
\begin{equation*}
\begin{split}
s m(G_{n,k,-c}) & \geq 2q+2+\frac{1}{q}\bigg(\frac{k(n-1)}{2} +\frac{n(n-1)}{2} -c(n-1) \bigg)\\
& = 2n(k+1)-2c+2+\frac{1}{n(k+1)-c}\bigg(\frac{n(n-1)(k+1)}{2} -c(n-1) \bigg)\\
& =  2n(k+1)-2c+2+\frac{n-1}{2}\bigg(\frac{n(k+1)-2c}{n(k+1)-c}\bigg).
\end{split}
\end{equation*}

Since $s m(G_{n,k,-c})$ is an integer, we consider the integer part of $\frac{n-1}{2}\bigg(\frac{n(k+1)-2c}{n(k+1)-c}\bigg)$. We have $\frac{n-1}{2}-\frac{n-1}{2}\bigg(\frac{n(k+1)-2c}{n(k+1)-c}\bigg)=\frac{(n-1)c}{2(n(k+1)-c)}$. Since $c\leq k$, we observe that $(n-1)c<2(n(k+1)-c)$. Therefore,

\begin{equation*}
\begin{split}
& 0< \frac{n-1}{2}-\frac{n-1}{2}\bigg(\frac{n(k+1)-2c}{n(k+1)-c}\bigg) < 1.
\end{split}
\end{equation*}

Hence for $G_{n,k,-c}$, we have
\begin{align*}  
s m(G_{n,k,-c})&\geq  2n(k+1)-2c+2+\frac{n-1}{2}\\
& = 2n(k+1)-2c+\frac{n+3}{2}.
\end{align*}
That is, 
\begin{equation}\label{eq:lb_k-c}
s m(G_{n,k,-c})\geq 2n(k+1)-2c+\frac{n+3}{2}.
\end{equation}
Now, we define a vertex labeling $f\colon V(G_{n,k,-c})  \longrightarrow \{1,\dots , p\}$ as follows:

For $1\leq i \leq n,$
\begin{equation}\label{eq:semG}
\begin{split}
&f(a_{i})= 
  \begin{cases} 
   \displaystyle{ \frac{i+1}{2}}& \text{ if } i \text{ is odd},  \\ \\
   \displaystyle{\frac{n+i+1}{2}}&  \text{ if } i \text{ is even}.
  \end{cases}\\
& f(a_{i,j})= n(k+2)-c-(n-1)j-i,\ 1\leq i\leq n-1,\ 1\leq j\leq k.\\
& f(a_{n,j})= n+j,\ 1\leq j\leq k-c. 
\end{split}
\end{equation}
As per the labeling defined in \eqref{eq:semG}, for any $u v\in E(G_{n,k,-c})$ we observe the following.
\begin{itemize}
\item If $u,v\in V(C_{n})$ then, $\{f(u)+f(v)\}=\big\{1+\frac{n+1}{2}, \dots , n+\frac{n+1}{2}\big\}$ is a consecutive sequence.
\item If $u=a_{n}$ and $v=a_{n,j}$, $1\leq j \leq k-c$, then we have  $\{f(u)+f(v)\}=\big\{n+\frac{n+3}{2} , \dots , n+k-c +\frac{n+1}{2} \big\}$, a consecutive sequence.
\item If $u=a_{i}$ and $v=a_{i,j}$, for $1\leq i\leq n-1,\ 1\leq j\leq k,$ then we have $\{f(u)+f(v)\}=\big\{n+k-c+\frac{n+3}{2} , \dots , n(k+1)-c+2 \big\}$, which is a consecutive sequence.
\end{itemize}

Thus we observe that $\{f(u)+f(v)\colon u v\in E(G_{n,k,-c})\}$ is a consecutive sequence with $\min \{f(u)+f(v)\colon u v\in E(G_{n,k,-c})\}= \frac{n+3}{2}$. Therefore by Lemma \ref{lem:SEM}, the vertex labeling $f$ extends to a super edge-magic total labeling of $G_{n,k,-c}$ with a magic constant $c(f)= 2n(k+1)-2c+\frac{n+3}{2}$. Hence,

\begin{equation}\label{eq:ub_k-c}
s m(G_{n,k,-c})\leq 2n(k+1)-2c+\frac{n+3}{2}.
\end{equation}

From \eqref{eq:lb_k-c} and \eqref{eq:ub_k-c}, we have
\begin{equation*}
 s m(G_{n,k,-c}) = 2n(k+1)-2c+\frac{n+3}{2}.
\end{equation*}

\end{proof}

\begin{example}
Super edge-magic total labeling of the graph $G_{5,4,-2}$ with super edge-magic total strength $s m(G_{5,4,-2})= 50$, is illustrated in Figure \ref{fig2a}.
\end{example}

\begin{example}
Super edge-magic total labeling of the graph $G_{5,8,-6}$ with super edge-magic total strength $s m(G_{5,8,-6})= 82$, is illustrated in Figure \ref{fig2}.
\end{example}

\begin{figure}[ht]
\centering
\begin{tikzpicture}
\filldraw[color=black!100, fill=white!5,  thin](6,3) circle (2.5);
\filldraw [black] (5,5.29) circle (1pt);
\filldraw [black] (7,0.712) circle (1pt);
\filldraw [black] (3.63,3.8) circle (1pt);
\filldraw [black] (8.3,4) circle (1pt);
\filldraw [black] (4,1.5) circle (1pt);
\filldraw [black] (5.6,-0.97) circle (0.5pt);
\filldraw [black] (8.5,-0.1) circle (0.5pt);
\filldraw [black] (2.09,1.9) circle (0.5pt);
\draw[black, thin] (5,5.29) -- (3.9,6.4);
\draw[black, thin] (5,5.29) -- (4.4,6.7);
\draw[black, thin] (5,5.29) -- (5,6.9);
\draw[black, thin] (5,5.29) -- (5.6,7);
\draw[black, thin] (5,5.29) -- (6.3,7);
\draw[black, thin] (5,5.29) -- (6.9,6.9);
\draw[black, thin] (5,5.29) -- (2.9,5.57);
\draw[black, thin] (5,5.29) -- (3.4,6.1);
\draw[black, thin] (8.3,4) -- (9.9,4);
\draw[black, thin] (8.3,4) -- (10,3.3);
\draw[black, thin] (8.3,4) -- (9.7,4.6);
\draw[black, thin] (8.3,4) -- (8.5,6.1);
\draw[black, thin] (8.3,4) -- (7.8,6.6);
\draw[black, thin] (8.3,4) -- (9.1,5.5);
\draw[black, thin] (8.3,4) -- (9.59,1.29);
\draw[black, thin] (8.3,4) -- (9.9,2);
\draw[black, thin] (7,0.712) -- (6.5,-0.96);
\draw[black, thin] (7,0.712) -- (7.2,-0.8);
\draw[black, thin] (7,0.712) -- (7.8,-0.6);
\draw[black, thin] (7,0.712) -- (5.6,-0.97);
\draw[black, thin] (7,0.712) -- (4.1,-0.5);
\draw[black, thin] (7,0.712) -- (4.8,-0.8);
\draw[black, thin] (7,0.712) -- (9.1,0.5);
\draw[black, thin] (7,0.712) -- (8.5,-0.1);
\draw[black, thin] (4,1.5) -- (3.5,-0.1);
\draw[black, thin] (4,1.5) -- (3,0.34);
\draw[black, thin] (4,1.5) -- (2.6,0.9);
\draw[black, thin] (4,1.5) -- (2.3,1.5);
\draw[black, thin] (4,1.5) -- (2.09,2.3);
\draw[black, thin] (4,1.5) -- (2.09,1.9);
\draw[black, thin] (4,1.5) -- (1.99,2.9);
\draw[black, thin] (4,1.5) -- (2,3.5);
\draw[black, thin] (3.63,3.8) -- (2.1,4.1);
\draw[black, thin] (3.63,3.8) -- (2.4,4.8);
\filldraw [black] (3,0.34) circle (0.5pt);
\filldraw [black] (2.6,0.9) circle (0.5pt);
\filldraw [black] (2.3,1.5) circle (0.5pt);
\filldraw [black] (9.1,0.5) circle (0.5pt);
\filldraw [black] (9.59,1.29) circle (0.5pt);
\filldraw [black] (9.9,2) circle (0.5pt);
\filldraw [black] (3.9,6.4) circle (0.5pt);
\filldraw [black] (4.4,6.7) circle (0.5pt);
\filldraw [black] (5,6.9) circle (0.5pt);
\filldraw [black] (3.5,-0.1) circle (0.5pt);
\filldraw [black] (4.1,-0.5) circle (0.5pt);
\filldraw [black] (4.8,-0.8) circle (0.5pt);
\filldraw [black] (5.6,7) circle (0.5pt);
\filldraw [black] (6.3,7) circle (0.5pt);
\filldraw [black] (6.9,6.9) circle (0.5pt);
\filldraw [black] (6.5,-0.96) circle (0.5pt);
\filldraw [black] (7.2,-0.8) circle (0.5pt);
\filldraw [black] (7.8,-0.6) circle (0.5pt);
\filldraw [black] (8.5,6.1) circle (0.5pt);
\filldraw [black] (7.8,6.6) circle (0.5pt);
\filldraw [black] (9.1,5.5) circle (0.5pt);
\filldraw [black] (2.09,2.3) circle (0.5pt);
\filldraw [black] (1.99,2.9) circle (0.5pt);
\filldraw [black] (2,3.5) circle (0.5pt);
\filldraw [black] (2.1,4.1) circle (0.5pt);
\filldraw [black] (2.4,4.8) circle (0.5pt);
\filldraw [black] (2.9,5.57) circle (0.5pt);
\filldraw [black] (3.4,6.1) circle (0.5pt);
\filldraw [black] (9.9,4) circle (0.5pt);
\filldraw [black] (10,3.3) circle (0.5pt);
\filldraw [black] (9.7,4.6) circle (0.5pt);
\filldraw [black] (5,5.29) node[anchor=north] {$1$};
\filldraw [black] (8.3,4) node[anchor=north] {$4$};
\filldraw [black] (7,1.2) node[anchor=north] {$2$};
\filldraw [black] (4.2,1.8) node[anchor=north] {$5$};
\filldraw [black] (3.8,3.9) node[anchor=north] {$3$};
\filldraw [black] (7,5.3) node[anchor=north] {{\tiny $77$}};
\filldraw [black] (8.1,2.4) node[anchor=north] {{\tiny $76$}};
\filldraw [black] (5.3,1.2) node[anchor=north] {{\tiny $75$}};
\filldraw [black] (3.8,3) node[anchor=north] {{\tiny $74$ }};
\filldraw [black] (4.4,4.8) node[anchor=north] {{\tiny $78$}};
\filldraw [black] (2.7,6) node[anchor=north] {$39$};
\filldraw [black] (3.5,5.6) node[anchor=north] {{\tiny $42$}};
\filldraw [black] (3.17,6.5) node[anchor=north] {$35$};
\filldraw [black] (3.7,6) node[anchor=north] {{\tiny $46$}};
\filldraw [black] (3.9,6.9)  node[anchor=north] {$31$};
\filldraw [black] (4,6.3) node[anchor=north] {{\tiny$50$}};
\filldraw [black] (4.4,7.2)node[anchor=north] {$27$};
\filldraw [black] (4.4,6.5) node[anchor=north] {{\tiny $54$}};
\filldraw [black] (5,7.4) node[anchor=north] {$23$};
\filldraw [black] (4.87,6.69) node[anchor=north] {{\tiny $58$}};
\filldraw [black] (5.6,7.5)  node[anchor=north] {$19$};
\filldraw [black] (5.3,6.74) node[anchor=north] {{\tiny $62$}};
\filldraw [black] (6.3,7.5) node[anchor=north] {$15$};
\filldraw [black] (5.8,6.8) node[anchor=north] {{\tiny $66$}};
\filldraw [black] (6.9,7.4) node[anchor=north] {$11$};
\filldraw [black] (6.3,6.78) node[anchor=north] {{\tiny $70$}};
\filldraw [black]  (7.8,7.1) node[anchor=north] {$38$};
\filldraw [black] (7.7,6.4) node[anchor=north] {{\tiny $40$}};
\filldraw [black] (8.7,6.6) node[anchor=north] {$34$};
\filldraw [black] (8.3,6) node[anchor=north] {{\tiny $44$}};
\filldraw [black] (9.4,5.8) node[anchor=north] {$30$};
\filldraw [black] (8.8,5.4) node[anchor=north] {{\tiny $48$}};
\filldraw [black] (10,4.9)  node[anchor=north] {$26$};
\filldraw [black] (9.2,4.78) node[anchor=north] {{\tiny $52$}};
\filldraw [black] (10.2,4.2) node[anchor=north] {$22$};
\filldraw [black] (9.4,4.34) node[anchor=north] {{\tiny $56$}};
\filldraw [black]  (10.3,3.6) node[anchor=north] {$18$};
\filldraw [black] (9.5,3.6) node[anchor=north] {{\tiny $60$}};
\filldraw [black] (10.2,2.2)  node[anchor=north] {$14$};
\filldraw [black] (9.5,3) node[anchor=north] {{\tiny $64$}};
\filldraw [black]  (9.9,1.5) node[anchor=north] {$10$};
\filldraw [black] (9.2,1.9) node[anchor=north] {{\tiny $68$}};
\filldraw [black] (9.2,0.5) node[anchor=north] {$37$};
\filldraw [black] (8.6,0.9) node[anchor=north] {{\tiny $43$}};
\filldraw [black] (8.5,-0.1) node[anchor=north] {$33$};
\filldraw [black] (8.2,0.4) node[anchor=north] {{\tiny $47$}};
\filldraw [black] (7.9,-0.6) node[anchor=north] {$29$};
\filldraw [black] (7.7,0.1) node[anchor=north] {{\tiny $51$}};
\filldraw [black]  (7.2,-0.8) node[anchor=north] {$25$};
\filldraw [black] (7.3,-0.1) node[anchor=north] {{\tiny $55$}};
\filldraw [black] (6.5,-0.96) node[anchor=north] {$21$};
\filldraw [black] (6.83,-0.2) node[anchor=north] {{\tiny $59$}};
\filldraw [black] (5.6,-0.97) node[anchor=north] {$17$};
\filldraw [black] (6.24,-0.25) node[anchor=north] {{\tiny $63$}};
\filldraw [black] (4.8,-0.8) node[anchor=north] {$13$};
\filldraw [black] (5.6,-0.2) node[anchor=north] {{\tiny $67$}};
\filldraw [black]  (4.1,-0.5) node[anchor=north] {$9$};
\filldraw [black] (4.86,-0.12) node[anchor=north] {{\tiny $71$}};
\filldraw [black] (3.4,-0.1) node[anchor=north] {$36$};
\filldraw [black] (3.8,0.6) node[anchor=north] {{\tiny $41$}};
\filldraw [black] (2.8,0.5)  node[anchor=north] {$32$};
\filldraw [black] (3.4,0.8) node[anchor=north] {{\tiny $45$}};
\filldraw [black] (2.3,1) node[anchor=north] {$28$};
\filldraw [black] (3.1,1.2) node[anchor=north] {{\tiny $49$}};
\filldraw [black] (2,1.6) node[anchor=north] {$24$};
\filldraw [black] (2.8,1.6) node[anchor=north] {{\tiny $53$}};
\filldraw [black] (1.8,2.2) node[anchor=north] {$20$};
\filldraw [black] (2.5,1.9) node[anchor=north] {{\tiny $57$}};
\filldraw [black] (1.8,2.6) node[anchor=north] {$16$};
\filldraw [black] (2.5,2.45) node[anchor=north] {{\tiny $61$}};
\filldraw [black] (1.8,3.2) node[anchor=north] {$12$};
\filldraw [black] (2.5,2.89) node[anchor=north] {{\tiny $65$}};
\filldraw [black]  (1.8,3.8) node[anchor=north] {$8$};
\filldraw [black] (2.5,3.5) node[anchor=north] {{\tiny $69$}};
\filldraw [black] (1.8,4.4) node[anchor=north] {$6$};
\filldraw [black] (2.5,4.1) node[anchor=north] {{\tiny $73$}};
\filldraw [black] (2.2,5.1)  node[anchor=north] {$7$};
\filldraw [black] (2.9,4.8) node[anchor=north] {{\tiny $72$}};
\end{tikzpicture}
\caption{The graph $G_{5,8,-6}$}
\label{fig2}
\end{figure}
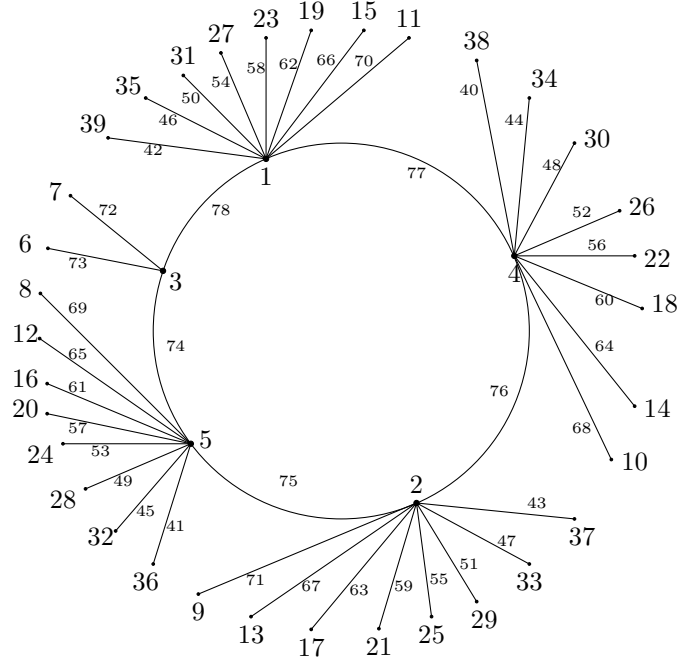

\section{Unicyclic graph $G(n;k,r)$}\label{sec:5}

Let $G(n;k,r)$ be the unicyclic graph $G(n,k_{1},\dots ,k_{n})$ with $k_{i}=k,$ if $i\neq r,n-r$ and $k_{r}=k_{n-r}=k+1$ for any odd number $r$, $1\leq r< n$. See Figure \ref{fig3a}.
Let $p=q=n(k+1)+2$, be the number of vertices and edges of $G(n;k,r)$.
Let $$V(G(n;k,r))=V(C_{n})\cup \big\{a_{i,j}\colon 1\leq i\leq n, 1\leq j\leq k\big\}\cup \{a_{r,k+1},a_{n-r,k+1}\},$$
and the edge set $E(G(n;k,r))$ be
\begin{equation*}
\begin{split}
& E(C_{n})\cup\big\{a_{i}a_{i,j}\colon 1\leq i\leq n,1\leq j\leq k\big\}\cup\big\{a_{i}a_{i,k+1}\colon i\in \{r,n-r\}\big\}.
\end{split}
\end{equation*}


For the unicyclic graph $G(n;k,r)$, we have the following theorem.
\begin{theorem}\label{thm:3}
The unicyclic graph $G(n;k,r)$, where $r$ is any odd number such that $1\leq r< n$, admits a super edge-magic total labeling and has a super edge-magic total strength $$sm(G(n;k,r))=2n(k+1)+4+\frac{n+3}{2}.$$
\end{theorem}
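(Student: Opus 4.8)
The plan is to follow the pattern of Theorems~\ref{thm:k+c} and~\ref{thm:k-c}: a lower bound for $sm(G(n;k,r))$ obtained from \eqref{eq:1} plus an integrality argument, and a matching upper bound obtained by exhibiting an explicit super edge-magic total labeling. Note first that $G(n;k,r)$ is super edge-magic total by Theorem~\ref{thm:uni}, being the graph $G(n;k_1,\dots,k_n)$ with $k_r=k_{n-r}=k+1$ and all other $k_i=k$, and recall $p=q=n(k+1)+2$.

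For the lower bound I would use the degree-weighted vertex sum. Pendant vertices have degree $1$, the cycle vertices $a_i$ with $i\notin\{r,n-r\}$ have degree $k+2$, and $a_r,a_{n-r}$ have degree $k+3$. Since $f(V)=\{1,\dots,p\}$ and the edge labels are $\{p+1,\dots,p+q\}$, \eqref{eq:1} rearranges, for every super edge-magic total labeling $f$, into
\[
q\,c(f)=q(2q+1)+(k+1)\sum_{i=1}^{n}f(a_i)+f(a_r)+f(a_{n-r}).
\]
Assigning the labels $1,\dots,n$ to the cycle vertices with $\{1,2\}$ on $a_r,a_{n-r}$ minimizes the right-hand side and yields $c(f)\ge 2q+1+\tfrac1q\bigl(\tfrac{(k+1)n(n+1)}{2}+3\bigr)$. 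Writing $N=n(k+1)$ so that $q=N+2$, a short computation gives $\tfrac1q\bigl(\tfrac{(k+1)n(n+1)}{2}+3\bigr)=\tfrac{n+1}{2}-\tfrac{n-2}{N+2}$, and since $0<\tfrac{n-2}{N+2}<1$ for $n\ge 3$ while $n$ is odd, the integer $sm(G(n;k,r))$ is at least $2q+1+\tfrac{n+1}{2}=2n(k+1)+4+\tfrac{n+3}{2}$. (Equivalently one may feed $m_1=m_2=k+1,\ m_3=\dots=m_n=k$ into Theorem~\ref{thm:uni} and take the integer part, as in the proof of Theorem~\ref{thm:k-c}.)

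For the upper bound I would build a vertex labeling $f\colon V(G(n;k,r))\to\{1,\dots,p\}$ whose edge sums $\{f(u)+f(v):uv\in E\}$ are $q$ consecutive integers with minimum $\tfrac{n+3}{2}$; Lemma~\ref{lem:SEM} then extends it to a super edge-magic total labeling with $c(f)=p+q+\tfrac{n+3}{2}=2n(k+1)+4+\tfrac{n+3}{2}$, matching the lower bound. On the cycle take the zigzag labeling $f(a_i)=\tfrac{i+1}{2}$ for $i$ odd and $f(a_i)=\tfrac{n+i+1}{2}$ for $i$ even, which uses the labels $\{1,\dots,n\}$, turns the $n$ cycle-edge sums into the block $\{\tfrac{n+3}{2},\dots,\tfrac{3n+1}{2}\}$, and satisfies the convenient identity $f(a_r)+f(a_{n-r})=n+1$ (valid since $r$ is odd and $n-r$ even). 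The $nk+2$ pendant vertices then receive the labels $\{n+1,\dots,n(k+1)+2\}$, arranged so that the pendant-edge sums continue consecutively as $\{\tfrac{3n+3}{2},\dots,n(k+1)+\tfrac{n+5}{2}\}$: I would split them into $k-1$ ``full rounds'' of $n$ pendants (one per cycle vertex), each round using a cyclic-shift assignment $v\mapsto v+\tfrac{n-1}{2}\pmod n$ of a consecutive length-$n$ block of labels so that its edge sums form a length-$n$ consecutive block stacked onto the previous ones, together with one final ``extended round'' of $n+2$ pendants comprising one pendant at every cycle vertex and the two extra pendants at $a_r$ and $a_{n-r}$.

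The crux is the extended round. The two extra pendants at $a_r$ and $a_{n-r}$ contribute edge sums $\tfrac{r+1}{2}+\ell$ and $n+1-\tfrac{r+1}{2}+\ell'$, and because $\tfrac{r+1}{2}$ and $n+1-\tfrac{r+1}{2}$ both lie strictly inside $\{1,\dots,n\}$, these two sums land in the interior of the final consecutive block rather than at its top; hence the remaining $n$ pendants of the extended round cannot simply use a cyclic shift of the top $n$ labels, and the assignment must be perturbed to leave exactly the two required interior gaps. I would pin down the pair of labels for the extra pendants explicitly in terms of $r$ (the two extra sums differ by a fixed amount determined by $n-r$, which fixes both the label pair and the gap pair) and then construct the bijection between the remaining $n$ pendants and their labels by hand so that its edge sums are precisely the complement of those two gaps inside the block; showing that such a bijection exists for every odd $r$ with $1\le r<n$, and that all $q$ edge sums are then distinct, consecutive, and of minimum $\tfrac{n+3}{2}$, is the bulk of the work. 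As a sanity check before carrying this out, I would verify that the sum of the $q$ target edge sums equals the right-hand side of the displayed identity with the zigzag values $\sum_i f(a_i)=\tfrac{n(n+1)}{2}$ and $f(a_r)+f(a_{n-r})=n+1$ substituted (it does, reducing to $q\bigl(2q+\tfrac{n+3}{2}\bigr)$), so that the explicit labeling is not over-determined.
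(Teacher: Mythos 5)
Your lower bound is correct and is essentially the paper's: your rearrangement of \eqref{eq:1} into $q\,c(f)=q(2q+1)+(k+1)\sum_i f(a_i)+f(a_r)+f(a_{n-r})$, the minimization, and the integrality step reproduce exactly the bound $2n(k+1)+4+\tfrac{n+3}{2}$ that the paper extracts from Theorem \ref{thm:uni} (with $m_1=m_2=k+1$), so that half is fine.

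The upper bound, however, has a genuine gap: you never actually produce the labeling. Your plan (zigzag on the cycle, $k-1$ full rounds of $n$ pendants with consecutive sum blocks, then an ``extended round'' of $n+2$ pendants absorbing the two extra pendants at $a_r$ and $a_{n-r}$) is structurally the right idea, but you explicitly defer its crux --- pinning down the labels of $a_{r,k+1}$ and $a_{n-r,k+1}$ and constructing, for \emph{every} odd $r$ with $1\le r<n$, a bijection for the remaining $n$ pendants whose sums fill exactly the complement of the two interior gaps --- calling it ``the bulk of the work.'' That deferred step is precisely the nontrivial content of Theorem \ref{thm:3}: the paper resolves it with the explicit three-case formula \eqref{eq:semlabel2} for $f(a_{i,k})$ (split at $f(a_i)\le\tfrac{n+1}{2}$, $\tfrac{n+3}{2}\le f(a_i)\le n-f(a_r)$, and $f(a_i)\ge n-f(a_r)+1$) together with $f(a_{r,k+1})=n(k+1)+2$ and $f(a_{n-r,k+1})=nk+r+3$, and then checks that the resulting sums are consecutive from $\tfrac{n+3}{2}$ to $n(k+1)+\tfrac{n+5}{2}$. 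Until you exhibit such an assignment (or an existence argument valid uniformly in $r$, including the boundary behaviour: e.g.\ for $r=n-2$ one of the two extra sums sits at the very top of the block, contrary to your claim that both land strictly in the interior, and for $r=1$ one of your ``gap'' sub-blocks degenerates), you have only matched the lower bound with a conjecture, not a proof; so as written the argument establishes $sm(G(n;k,r))\ge 2n(k+1)+4+\tfrac{n+3}{2}$ but not the equality.
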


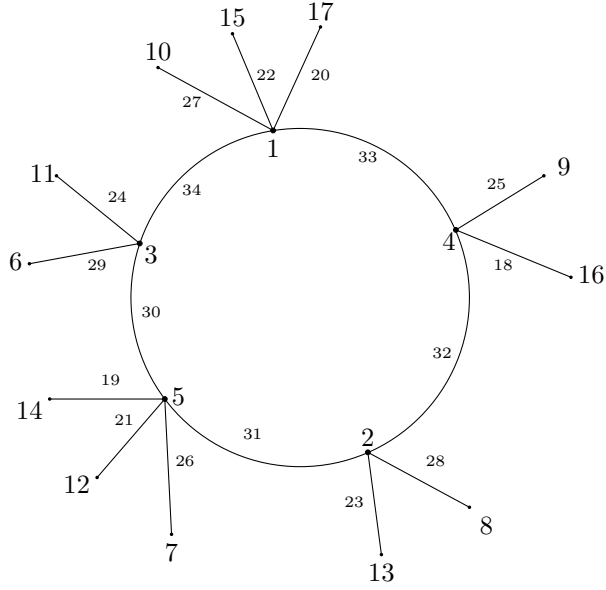
\begin{figure}[ht]
\centering
\begin{tikzpicture}[scale=0.9]
\filldraw[color=black!100, fill=white!5,  thin](6,3) circle (2.5);
\filldraw [black] (5.6,5.47) circle (1pt);
\filldraw [black] (7,0.712) circle (1pt);
\filldraw [black] (3.63,3.8) circle (1pt);
\filldraw [black] (8.3,4) circle (1pt);
\filldraw [black] (4,1.5) circle (1pt);
\filldraw [black] (8.5,-0.1) circle (0.5pt); 
\draw[black, thin] (5.6,5.47) -- (3.9,6.4);
\draw[black, thin] (5.6,5.47) -- (5,6.9);
\draw[black, thin] (5.6,5.47) -- (6.3,7);
\draw[black, thin] (8.3,4) -- (10,3.3);
\draw[black, thin] (8.3,4) -- (9.6,4.8);
\filldraw [black] (9.6,4.8) circle (0.5pt); 
\draw[black, thin] (7,0.712) -- (7.2,-0.8);
\draw[black, thin] (7,0.712) -- (8.5,-0.1);
\draw[black, thin] (4,1.5)  -- (4.1,-0.5);
\draw[black, thin] (4,1.5) -- (3,0.34);
\draw[black, thin] (4,1.5) -- (2.3,1.5);
\draw[black, thin] (3.63,3.8) -- (2,3.5);
\draw[black, thin] (3.63,3.8) -- (2.4,4.8);
\filldraw [black] (3.9,6.9)  node[anchor=north] {$10$};
\filldraw [black] (4.4,6.1) node[anchor=north] {{\tiny $27$}};
\filldraw [black] (5,7.4) node[anchor=north] {$15$};
\filldraw [black] (5.5,6.5) node[anchor=north] {{\tiny $22$}};
\filldraw [black] (6.3,7.5) node[anchor=north] {$17$};
\filldraw [black] (6.3,6.5) node[anchor=north] {{\tiny $20$}};
\filldraw [black] (9.9,5.2)   node[anchor=north] {$9$};
\filldraw [black] (8.9,4.9) node[anchor=north] {{\tiny $25$}};
\filldraw [black]  (10.3,3.6) node[anchor=north] {$16$};
\filldraw [black] (9,3.7) node[anchor=north] {{\tiny $18$}};
\filldraw [black] (8.75,-0.1) node[anchor=north] {$8$};
\filldraw [black] (8,0.8) node[anchor=north] {{\tiny $28$}};
\filldraw [black]  (7.2,-0.8) node[anchor=north] {$13$};
\filldraw [black] (6.8,0.2) node[anchor=north] {{\tiny $23$}};
\filldraw [black]  (4.1,-0.5) node[anchor=north] {$7$};
\filldraw [black] (4.3,0.8) node[anchor=north] {{\tiny $26$}};
\filldraw [black] (2.7,0.5)  node[anchor=north] {$12$};
\filldraw [black] (3.4,1.4) node[anchor=north] {{\tiny $21$}};
\filldraw [black] (2,1.6) node[anchor=north] {$14$};
\filldraw [black] (3.2,2) node[anchor=north] {{\tiny $19$}};
\filldraw [black]  (1.8,3.8) node[anchor=north] {$6$};
\filldraw [black] (3,3.7) node[anchor=north] {{\tiny $29$}};
\filldraw [black] (2.2,5.1)  node[anchor=north] {$11$};
\filldraw [black] (3.3,4.7) node[anchor=north] {{\tiny $24$}};
\filldraw [black] (3,0.34) circle (0.5pt);
\filldraw [black] (2.3,1.5) circle (0.5pt);
\filldraw [black] (4.1,-0.5) circle (0.5pt);
\filldraw [black] (3.9,6.4) circle (0.5pt);
\filldraw [black] (5,6.9) circle (0.5pt);
\filldraw [black] (6.3,7) circle (0.5pt);
\filldraw [black] (7.2,-0.8) circle (0.5pt);
\filldraw [black] (2,3.5) circle (0.5pt);
\filldraw [black] (2.4,4.8) circle (0.5pt);
\filldraw [black] (10,3.3) circle (0.5pt);
\filldraw [black] (5.6,5.47) node[anchor=north] {$1$};
\filldraw [black] (8.2,4.1) node[anchor=north] {$4$};
\filldraw [black] (7,1.2) node[anchor=north] {$2$};
\filldraw [black] (4.2,1.8) node[anchor=north] {$5$};
\filldraw [black] (3.8,3.9) node[anchor=north] {$3$};
\filldraw [black] (7,5.3) node[anchor=north] {{\tiny $33$}};
\filldraw [black] (8.1,2.4) node[anchor=north] {{\tiny  $32$}};
\filldraw [black] (5.3,1.2) node[anchor=north] {{\tiny $31$}};
\filldraw [black] (3.8,3) node[anchor=north] { {\tiny $30$}};
\filldraw [black] (4.4,4.8) node[anchor=north] {{\tiny $34$}};
\end{tikzpicture}
\caption{The graph $G(5;2,1)$}
\label{fig3a}
\end{figure}

\begin{proof}
By Theorem \ref{thm:uni}, the unicyclic graph $G(n;k,r)$ is a super edge-magic total graph with

\begin{equation*}
\begin{split}
sm(G(n;k,r)) & \geq 2q+2+\frac{1}{q}\bigg((k+1)+2k+\dots + (n-1)k+\frac{n(n-1)}{2}\bigg) \\
& = 2n(k+1)+6+\frac{1}{n(k+1)+2}\bigg(\frac{nk(n-1)}{2} +\frac{n(n-1)}{2} +1\bigg)\\
& =  2n(k+1)+6+\frac{n-1}{2}\bigg(\frac{n(k+1)}{n(k+1)+2}\bigg)+ \frac{1}{n(k+1)+2}.
\end{split}
\end{equation*}
That is, we have $sm(G(n;k,r)) \geq 2n(k+1)+6+\frac{n-1}{2}\big(\frac{n(k+1)}{n(k+1)+2}\big)+ \frac{1}{n(k+1)+2}. $
We know that $s m(G(n;k,r))$ is an integer and we can observe that
\begin{equation*}
\begin{split}
& \frac{n-1}{2} - \bigg (\frac{n-1}{2}\bigg(\frac{n(k+1)}{n(k+1)+2}\bigg) + \frac{1}{n(k+1)+2} \bigg)\\
& = \frac{n-1}{2} \bigg(\frac{2}{n(k+1)+2}\bigg)- \frac{1}{n(k+1)+2} \\
& = \frac{n-2}{n(k+1)+2}\\  
& < 1. 
\end{split}
\end{equation*}
Hence, we observe that the integer part of $\frac{n-1}{2}\big(\frac{n(k+1)}{n(k+1)+2}\big)+ \frac{1}{n(k+1)+2}$ is $\frac{n-1}{2}$. And we have
\begin{align*}  
sm(G(n;k,r)) & \geq 2n(k+1)+6+\frac{n-1}{2} = 2n(k+1)+4+\frac{n+3}{2}.
\end{align*}

Therefore, we can express
\begin{equation}\label{eq:3}
sm(G(n;k,r)) \geq  2n(k+1)+4+\frac{n+3}{2}.
\end{equation}

Now, if we prove that there exists a super edge-magic total labeling $f$ of $G(n;k,r)$ with magic constant $c(f)=2n(k+1)+4+\frac{n+3}{2}$, then our proof is complete. 

Let us define a vertex labeling $f\colon V(G(n;k,r))  \longrightarrow \{1,\dots , p\}$ as follows.
For $1\leq i \leq n,$ 
\begin{equation}\label{eq:semlabel1}
f(a_{i})= 
  \begin{cases} 
   \displaystyle{ \frac{i+1}{2}}& \text{ if } i \text{ is odd},  \\ \\
   \displaystyle{\frac{n+i+1}{2}}&  \text{ if } i \text{ is even}.
  \end{cases}
\end{equation}
And,

\begin{equation}\label{eq:semlabel2}
\begin{split}
& f(a_{i,j})= n(k-j+2)-2f(a_{i})+2 ,\ 1\leq i\leq n,\ 1\leq j\leq k-1,\\
&  f(a_{i,k})= 
  \begin{cases} 
   2n +2-2f(a_{i}) & \text{ if }  1\leq f(a_{i})\leq \frac{n+1}{2},  \\ 
   n(k+2) +4 - 2f(a_{i})& \text{ if }  \frac{n+3}{2}\leq f(a_{i})\leq n-f(a_{r}),   \\
   n(k+2) + 2 - 2f(a_{i})& \text{ if }  n-f(a_{r})+1 \leq f(a_{i})\leq n,
  \end{cases}\\
& f(a_{r,k+1})= n(k+1) +2 ,\\
& f(a_{n-r,k+1})= n k+ r +3.
\end{split}
\end{equation}

As per the above labeling, for $u v\in E(G(n;k,r))$ we observe that:

\begin{itemize}
\item For $u,v\in V(C_{n})$, $\{f(u)+f(v)\}=\big\{1+\frac{n+1}{2}, \dots , n+\frac{n+1}{2}\big\}$ is a consecutive sequence.
\item Let us consider $u=a_{i}$ and $v=a_{i,j}$, for any  $1\leq i\leq n,\ 1\leq j\leq k-1$. Then  $\{f(u)+f(v)\}=\big\{n(k-j+2)-f(a_{i})+2 \colon 1\leq i\leq n,\ 1\leq j\leq k-1\big\}$ is a consecutive sequence with minimal element $2n+2$ and maximal element $n (k+1)+1$.
\item Let $u=a_{i}$ and $v=a_{i,k}$, $1\leq i\leq n$. 
 \begin{itemize}
     \item If $1\leq f(a_{i})\leq \frac{n+1}{2},$ then $\{f(a_{i})+f(a_{i,k})\}= \big\{n+\frac{n+3}{2},\dots , 2n+1\big\}$, is a consecutive sequence.
     \item If $\frac{n+3}{2}\leq f(a_{i})\leq n-f(a_{r})$, then $\{f(a_{i})+f(a_{i,k})\}$ is consecutive and equals $\bigg\{n(k+1)+4+f(a_{r}),\dots , n(k+1)+4+\frac{n-3}{2}\bigg\}.$
     \item If $n-f(a_{r})+1 \leq f(a_{i})\leq n,$ then we see that the set $\{f(a_{i})+f(a_{i,k})\}=\bigg\{n(k+1)+2,\dots , n(k+1)+1+f(a_{r})\bigg\}$, is consecutive. 
 \end{itemize}
\item For $u=a_{r}$ and $v=a_{r,k+1}$, $f(u)+f(v)= n (k+1)+2+f(a_{r})$.
\item If $u=a_{n-r}$ and $v=a_{n-r,k+1}$, then $f(u)+f(v)= n (k+1)+3 +f(a_{r})$.
\end{itemize}

Therefore, we observe that  $\{f(u)+f(v)\colon u v\in E(G(n;k,r))\}$ is a consecutive sequence whose minimum element is $\frac{n+3}{2}$. Hence by Lemma \ref{lem:SEM}, the vertex labeling $f$ extends to a super edge-magic total labeling of $G(n;k,r)$ with magic constant $c(f)= 2n(k+1)+4+\frac{n+3}{2}$. Hence,

\begin{equation}\label{eq:5}
s m(G(n;k,r))\leq 2n(k+1)+4+\frac{n+3}{2}.
\end{equation}

From \eqref{eq:3} and \eqref{eq:5}, we have

\begin{equation*}
\begin{split}
& 2n(k+1)+4+\frac{n+3}{2} \leq s m(G(n;k,r))\leq 2n(k+1)+4+\frac{n+3}{2} \\
&\implies s m(G(n;k,r)) = 2n(k+1)+4+\frac{n+3}{2}.
\end{split}
\end{equation*}

\end{proof}


\begin{example}
Super edge-magic total labeling of the graph $G(5;2,1)$ with super edge-magic total strength $s m(G(5;2,1))= 38$ is illustrated in Figure \ref{fig3a}. 
\end{example}

\begin{example}
Super edge-magic total labeling of the graph $G(5;2,3)$ with super edge-magic total strength $s m(G(5;2,3))= 38$ is illustrated in Figure \ref{fig3b}. 
\end{example}

\begin{figure}[ht]
\centering
\begin{tikzpicture}[scale=0.9]
\filldraw[color=black!100, fill=white!5,  thin](6,3) circle (2.5);
\filldraw [black] (5.6,5.47) circle (1pt);
\filldraw [black] (7,0.712) circle (1pt);
\filldraw [black] (3.63,3.8) circle (1pt);
\filldraw [black] (8.3,4) circle (1pt);
\filldraw [black] (4,1.5) circle (1pt);
\filldraw [black] (8.5,-0.1) circle (0.5pt); 
\draw[black, thin] (5.6,5.47) -- (3.9,6.4);
\draw[black, thin] (5.6,5.47) -- (6.3,7);
\draw[black, thin] (8.3,4) -- (10,3.3);
\draw[black, thin] (8.3,4) -- (9.6,4.8);
\filldraw [black] (9.6,4.8) circle (0.5pt); 
\draw[black, thin] (8.3,4) -- (8.5,6.1);
\filldraw [black] (8.5,6.1) circle (0.5pt); 
\draw[black, thin] (7,0.712) -- (7.2,-0.8);
\draw[black, thin] (7,0.712) -- (9.1,0.5);
\filldraw [black] (9.1,0.5) circle (0.5pt); 
\draw[black, thin] (7,0.712) -- (8.5,-0.1);
\draw[black, thin] (4,1.5)  -- (4.1,-0.5);
\draw[black, thin] (4,1.5) -- (2.3,1.5);
\draw[black, thin] (3.63,3.8) -- (2,3.5);
\draw[black, thin] (3.63,3.8) -- (2.4,4.8);
\filldraw [black] (4.4,6.1) node[anchor=north] {{\tiny $27$}};
\filldraw [black] (6.3,6.5) node[anchor=north] {{\tiny $22$}};
\filldraw [black] (8.9,4.9) node[anchor=north] {{\tiny $20$}};
\filldraw [black] (8.2,5.5) node[anchor=north] {{\tiny $25$}};
\filldraw [black] (9,3.7) node[anchor=north] {{\tiny $18$}};
\filldraw [black] (8.2,1.1) node[anchor=north] {{\tiny $28$}};
\filldraw [black] (7.6,0.4) node[anchor=north] {{\tiny $23$}};
\filldraw [black] (6.8,0.2) node[anchor=north] {{\tiny $19$}};
\filldraw [black] (4.3,0.8) node[anchor=north] {{\tiny $26$}};
\filldraw [black] (3.2,2) node[anchor=north] {{\tiny $21$}};
\filldraw [black] (3,3.7) node[anchor=north] {{\tiny $29$}};
\filldraw [black] (3.3,4.7) node[anchor=north] {{\tiny $24$}};
\filldraw [black] (3.9,6.9)  node[anchor=north] {$10$};
\filldraw [black] (6.3,7.5) node[anchor=north] {$15$};
\filldraw [black] (8.6,6.6) node[anchor=north] {$9$};
\filldraw [black] (10,5.2)   node[anchor=north] {$14$};
\filldraw [black]  (10.3,3.6) node[anchor=north] {$16$};
\filldraw [black] (9.4,0.8) node[anchor=north] {$8$};
\filldraw [black] (8.5,-0.1) node[anchor=north] {$13$};
\filldraw [black]  (7.2,-0.8) node[anchor=north] {$17$};
\filldraw [black]  (4.1,-0.5) node[anchor=north] {$7$};
\filldraw [black] (2,1.8) node[anchor=north] {$12$};
\filldraw [black]  (1.8,3.8) node[anchor=north] {$6$};
\filldraw [black] (2.2,5.1)  node[anchor=north] {$11$};
\filldraw [black] (2.3,1.5) circle (0.5pt);
\filldraw [black] (4.1,-0.5) circle (0.5pt);
\filldraw [black] (3.9,6.4) circle (0.5pt);
\filldraw [black] (6.3,7) circle (0.5pt);
\filldraw [black] (7.2,-0.8) circle (0.5pt);
\filldraw [black] (2,3.5) circle (0.5pt);
\filldraw [black] (10,3.3) circle (0.5pt);
\filldraw [black] (5.6,5.47) node[anchor=north] {$1$};
\filldraw [black] (8.2,4) node[anchor=north] {$4$};
\filldraw [black] (7,1.2) node[anchor=north] {$2$};
\filldraw [black] (4.2,1.8) node[anchor=north] {$5$};
\filldraw [black] (3.8,3.9) node[anchor=north] {$3$};
\filldraw [black] (7,5.3) node[anchor=north] {{\tiny $33$}};
\filldraw [black] (8.1,2.4) node[anchor=north] {{\tiny  $32$}};
\filldraw [black] (5.3,1.2) node[anchor=north] {{\tiny $31$}};
\filldraw [black] (3.8,3) node[anchor=north] { {\tiny $30$}};
\filldraw [black] (4.4,4.8) node[anchor=north] {{\tiny $34$}};
\end{tikzpicture}
\caption{The graph $G(5;2,3)$}
\label{fig3b}
\end{figure}
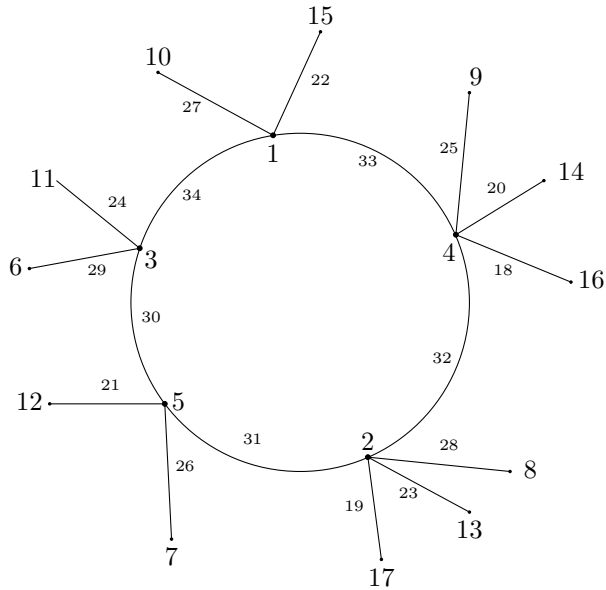

\section{Conclusions}\label{sec:6}

In this study, we determine the super edge-magic total strength of three variations of $G(n;k_{1},\dots ,k_{n})$, unicyclic $(p,q)$-graphs. All three of them have super edge-magic total strength equal to $2q + \frac{n+3}{2}.$ These results can be considered as the preliminary steps to provide evidence in proving the Conjecture \ref{cnj:main}.


\bigskip

\end{document}